\numberwithin{equation}{section}
\newtheorem{theorem}{Theorem}[section]
\newtheorem{claim}[theorem]{Claim}
\newtheorem{proposition}[theorem]{Proposition}
\newtheorem{lemma}[theorem]{Lemma}
\newtheorem{corollary}[theorem]{Corollary}
\newtheorem*{theorem*}{Theorem}
\newtheorem*{claim*}{Claim}
\newtheorem*{proposition*}{Proposition}
\newtheorem*{lemma*}{Lemma}
\newtheorem*{corollary*}{Corollary}
\newtheorem{theoremA}{Theorem}
\theoremstyle{definition}
\newtheorem{definition}[theorem]{Definition}
\newtheorem{question}[theorem]{Question}
\newtheorem{fact}[theorem]{Fact}
\newtheorem*{definition*}{Definition}
\newtheorem*{observation*}{Observation}
\newtheorem*{remark*}{Remark}
\newtheorem*{example*}{Example}
\newtheorem*{question*}{Question}
\newtheorem*{exercise*}{Exercise}
\newtheorem*{fact*}{Fact}
\newtheorem*{notation*}{Notation}
\newcommand{\bbH}{\mathbb{H}}
\newcommand{\bbN}{\mathbb{N}}
\newcommand{\bbR}{\mathbb{R}}
\newcommand{\calW}{\mathcal{W}}
\newcommand{\ii}{^{-1}}
\newcommand{\tild}[1]{\widetilde{#1}}
\DeclareMathOperator{\sep}{sep}
\DeclareMathOperator{\vol}{vol}
\DeclareMathOperator{\cut}{cut}
\DeclareMathOperator{\length}{length}
\DeclareMathOperator{\im}{Im}
\title{Hyperbolic groups with logarithmic separation profile}
\author{Nir Lazarovich\thanks{Supported by the Israel Science Foundation (grant no. 1562/19), and by the German-Israeli Foundation for Scientific Research and Development.}~ and Corentin Le Coz\thanks{Supported by the Israel Science Foundation (grant no. 2919/19)}}
\date{}
\begin{document}

\maketitle

\begin{abstract}
   We prove that hyperbolic groups with logarithmic separation profiles split over cyclic groups.
   This shows that such groups can be inductively built from Fuchsian groups and free groups by amalgamations and HNN extensions over finite or virtually cyclic groups.
   However, we show that not all groups admitting such a hierarchy have logarithmic separation profile by providing an example of a surface amalgam over a cyclic group with superlogarithmic separation profile.
\end{abstract}

\section{Introduction}

The separation profile was first introduced by Benjamini-Schramm-Tim\'ar~\cite{benjamini2010separation} in 2012.
It measures large scale connectivity of infinite graphs, in the spirit of the celebrated theorem of Lipton and Trajan for planar graphs \cite{liptontarjan1}.

\begin{definition}[Benjamini-Schramm-Tim\'ar \cite{benjamini2010separation}]
	Given a finite graph $\Gamma = (V\Gamma,E\Gamma)$, we shall say that a set of vertices $C \subset V\Gamma$ \textbf{cuts} (or \textbf{separates}) the graph $\Gamma$ if the connected components of the subgraph induced by $V\Gamma - C$ contain at most $\frac 12 |V\Gamma|$ vertices.
	
	We define the \textbf{cut} of the graph $\Gamma$, denoted $\cut \Gamma$, as the minimal size of a separating set.
	
	We define the \textbf{separation profile} of a bounded degree infinite graph $G$ as the following non-decreasing function from $\bbN^*$ to $\bbN^*$:
$$ \sep_G(n) = \sup_{\Gamma \subset G, |V\Gamma| \le n} \cut\Gamma $$
\end{definition}

We shall consider such function endowed with the partial order defined by $g \preceq h$ if and only if there exists $D>0$ such that $g(n) \le Dh(Dn) + D$ for any $n \ge D$.
We denote by $\asymp$ and $\prec$ the associated equivalence relation and strict partial order, respectively.

As noticed in \cite{benjamini2010separation}, the factor $1/2$ does not play an important role in the previous definition.
Replacing it by any $\beta \in (0,1)$ would give an equivalent profile.\\

The separation profile is a coarse-geometric monotone invariant (see Proposition \ref{prop: sep monotone under coarse embeddings}).
To our knowledge, the only such invariants that were previously defined are volume growth and asymptotic dimension \cite{gromov1993asymptotic}.
The separation profile is a much finer invariant and has been generalized by Hume-Mackay-Tessera \cite{hume2020poincare,hume2021poincare} into a spectrum of profiles called Poincar\'e profiles.
For a survey on this topic, we refer to the first part of the thesis of the second author \cite{lecoz2020thesis}.

It is proved in \cite{benjamini2010separation} (see also \cite{hume2020poorly}) that if a hyperbolic group has $\sep_G(n) \prec \log(n)$ then $\sep_G(n)$ is bounded and $G$ is virtually free.

In this paper we investigate the smallest possible non-virtually free case, namely $\sep_G(n) \preceq \log(n)$.

\begin{theoremA}\label{thm: main thm}
    Let $G$ be a hyperbolic group with $\sep_G(n) \preceq \log(n)$, then $G$ is Fuchsian or splits over finite or virtually cyclic subgroups.
\end{theoremA}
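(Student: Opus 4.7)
The plan is to split into cases according to the number of ends of $G$ and the topology of its Gromov boundary, and to concentrate all of the work on the case where no splitting is immediately available.

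First, if $G$ has more than one end, then by Stallings' theorem $G$ splits over a finite subgroup and the theorem holds; so we may assume $G$ is one-ended (the finite case being trivial). By Bowditch's theorem characterising JSJ splittings of one-ended hyperbolic groups, $G$ admits a splitting over a two-ended (i.e.\ virtually cyclic) subgroup if and only if $\partial G$ contains a local cut point; in this case we are again done. Thus we may further assume that $G$ is one-ended and $\partial G$ has no local cut points.

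Under this assumption, a further structural result of Bowditch states that either $\partial G$ is homeomorphic to the circle $S^1$, in which case $G$ is virtually Fuchsian and we are done, or $\partial G$ is a connected, locally connected continuum without local cut points which is not homeomorphic to $S^1$. The theorem thus reduces to ruling out this last possibility under the hypothesis $\sep_G(n) \preceq \log(n)$.

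This reduction is the main obstacle, and the heart of the argument. The goal is to show that the topological richness of $\partial G$ (no local cut points, not a circle) forces $\sep_G$ to be super-logarithmic. Concretely, starting from a non-degenerate subcontinuum $K \subseteq \partial G$ which is neither a circle nor has local cut points, I would exhibit, for each $r$, a finite subgraph $\Gamma_r \subseteq \Cay(G)$ of size at most exponential in $r$, obtained as (roughly) the set of vertices in the ball of radius $r$ about a basepoint which lie near geodesic rays whose endpoints sample $K$ at the visual scale $e^{-r}$. The failure of $K$ to have local cut points should produce enough redundant parallel connectivity between "arcs" of such rays that any separating set of $\Gamma_r$ must meet every scale, yielding a cut of size polynomial in $r$, hence $\omega(\log |\Gamma_r|)$. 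Since $\sep$ is a monotone coarse-geometric invariant (Proposition~\ref{prop: sep monotone under coarse embeddings}), this contradicts the hypothesis. The delicate part of this last step is the passage from the topological statement about $\partial G$ to a quantitative lower bound on cuts in these subgraphs, which is where the novelty of the proof should lie; the cleanest implementation likely goes via a Bowditch-style coarse analysis of "tight" bi-infinite geodesics combined with a pigeonhole argument counting disjoint "branches" forced by the absence of local cut points.
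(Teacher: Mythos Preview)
Your reductions via Stallings and Bowditch are correct and match the paper's overall logical structure, but the core step --- deriving super-logarithmic separation from the absence of local cut points in $\partial G$ --- is left as a sketch, and as stated it has a genuine gap. ``No local cut points'' is a purely qualitative topological condition; extracting a quantitative lower bound on cuts from it is precisely the hard part, and you acknowledge this. Moreover, the specific mechanism you propose (a separating set must ``meet every scale'') would at best produce cuts of order $r \asymp \log |\Gamma_r|$, which is not $\omega(\log |\Gamma_r|)$; to beat logarithmic you would need many disjoint parallel routes at each scale, and there is no evident way to manufacture those uniformly from a general boundary without local cut points. Nothing in your outline explains where a super-linear-in-$r$ count of disjoint paths would come from.

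The paper avoids this difficulty by running the implication in the opposite direction, which is far more tractable: rather than assuming no boundary cut points and seeking large cuts, it starts from $\sep_G \preceq \log$ and directly \emph{constructs} a finite topological separator of $\partial G$. Concretely, the annulus $A(R)=B_{3R}\setminus B_{2R}$ has size $\asymp e^R$, so by hypothesis it has a cut set $C$ with $|C|\preceq R$. One projects $C$ radially onto the sphere $S_R$; a shadow/sector volume-comparison (Patterson--Sullivan / Coornaert style) shows that the projected set is a separator of the $\delta$-thickened sphere of \emph{uniformly bounded} size. A diagonal/compactness argument then pushes these bounded sphere-separators out to a finite separating set of $\partial G$, at which point Bowditch yields the dichotomy. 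The point is that passing from quantitative (log cuts) to qualitative (finite boundary separator) is natural and can be done by soft limiting arguments, whereas your contrapositive demands the reverse passage, for which no mechanism is supplied.
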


This theorem is proved in Section \ref{section: proof of theorem A}, but let us give here a sketch of proof.
The first step consists in showing that the spheres of $G$ have bounded separating sets.
This is done by projecting the separating set of some suitable annulus.
Then, we make these sphere separating sets converge in $\partial G$.
This implies the existence of local cut points in $\partial G$, and the conclusion follows from Bowditch \cite{bowditch1998cut}.

\begin{corollary}
    Let $G$ be a hyperbolic group without 2-torsion. If $\sep_G(n) \preceq \log(n)$ then $G$ can be inductively built from Fuchsian groups and free groups by amalgamations and HNN extensions over finite or virtually cyclic groups.
\end{corollary}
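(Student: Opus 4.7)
The strategy is to iterate Theorem~\ref{thm: main thm} on vertex groups, with an accessibility theorem closing the induction.

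First apply Theorem~\ref{thm: main thm} to $G$: either $G$ is Fuchsian and the hierarchy is trivial, or $G$ splits as a finite graph of groups whose edge groups are finite or virtually cyclic. In a hyperbolic group such small edge groups are automatically quasiconvex, and the corresponding vertex groups are then also quasiconvex, hence themselves hyperbolic. Each vertex group quasi-isometrically embeds into $G$, so Proposition~\ref{prop: sep monotone under coarse embeddings} transfers the hypothesis $\sep \preceq \log$ to it, and Theorem~\ref{thm: main thm} applies again. Iterating produces a tree of splittings.

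To complete the argument one must check that this tree is finite and that its leaves are Fuchsian or (virtually) free. Dunwoody's accessibility handles splittings over finite edge groups, and strong accessibility of hyperbolic groups over two-ended subgroups handles the virtually cyclic edge groups; the no $2$-torsion hypothesis is precisely what makes the latter statement available, since it rules out the infinite dihedral behaviour that obstructs termination of virtually cyclic JSJ-type hierarchies. At a leaf $H$ no further splitting over a finite or virtually cyclic subgroup exists, so by Theorem~\ref{thm: main thm} the leaf $H$ is Fuchsian; the remaining bottoms of the hierarchy are virtually free and so decompose, via Stallings--Dunwoody, into graphs of trivial/free groups with finite edge groups, contributing the free leaves in the final decomposition.

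The main obstacle is the accessibility step: one needs a termination theorem in the mixed regime where finite and virtually cyclic edge groups occur simultaneously, together with a careful verification that the absence of $2$-torsion is what makes the strongly accessible decomposition terminate at the Fuchsian/free dichotomy rather than at some rigid exotic piece. Once that ingredient is in place, the corollary reduces to a routine induction on the depth of the hierarchy, each step of which is a single application of Theorem~\ref{thm: main thm}.
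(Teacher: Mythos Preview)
Your proposal is correct and follows essentially the same route as the paper: apply Theorem~\ref{thm: main thm}, note that virtually cyclic edge groups are quasiconvex so vertex groups are quasiconvex hyperbolic, pass the $\log$ bound down via monotonicity (Proposition~\ref{prop: sep monotone under coarse embeddings}), and iterate. The only point on which you hedge is the termination step; the paper handles this in one line by citing the Strong Accessibility theorem of Louder--Touikan~\cite{louder2017strong}, which is precisely the ``mixed regime'' result you are looking for and whose hypothesis is exactly the absence of $2$-torsion.
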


\begin{proof}
We can apply Theorem \ref{thm: main thm} to $G$.
Either $G$ is Fuchsian and we are done. Otherwise, $G$ splits over virtually cyclic groups.
The edge groups are virtually cyclic, hence quasiconvex in $G$.
This implies that the vertex groups of this splitting are quasiconvex and hence hyperbolic.  
By the monotonicity of the separation profile, the separation profile of the vertex groups $H$ is $\sep_H(n)\preceq \sep_G(n)\asymp\log(n)$.
Therefore, we can successively apply Theorem \ref{thm: main thm} to split $G$ over virtually cyclic subgroups.
Using the Strong Accessibility by Louder-Touikan \cite{louder2017strong} this process terminates.
\end{proof}

A group with conformal dimension at least one always has a separation profile bounded below by $\log$, from \cite{hume2020poorly}. 
Using a recent result of Carrasco-Mackay~\cite{carrasco2020conformal} giving a characterization of hyperbolic groups with conformal dimension one, we get the following corollary.

\begin{corollary}\label{corollary: confdim>1 implies sep>log}
	Let~$G$ be a one-ended hyperbolic group with no~$2$-torsion.
	If the (Ahlfors regular) conformal dimension of~$G$ is strictly greater than~$1$, then its separation profile is strictly greater than~$\log$.
\end{corollary}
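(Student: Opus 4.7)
The plan is to argue by contradiction, combining the preceding Corollary with the characterization of Carrasco--Mackay.

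Assume towards a contradiction that $\sep_G(n)\not\succ\log(n)$. Since, by \cite{hume2020poorly}, any group of (Ahlfors regular) conformal dimension at least $1$ satisfies $\sep_G(n)\succeq\log(n)$, the assumption $\mathrm{confdim}(G)>1$ forces $\sep_G(n)\succeq\log(n)$; together with the contradictory hypothesis this gives $\sep_G(n)\asymp\log(n)$, so in particular $\sep_G(n)\preceq\log(n)$.

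Next, invoke the previous Corollary, whose hypotheses ($G$ hyperbolic, no $2$-torsion, $\sep_G(n)\preceq\log(n)$) are all satisfied. It yields that $G$ can be inductively built from Fuchsian groups and free groups by amalgamations and HNN extensions over finite or virtually cyclic subgroups. Since $G$ is one-ended, Stallings' theorem rules out splittings over finite subgroups, so the non-trivial stages of the hierarchy are all over virtually cyclic subgroups; the leaves of the hierarchy are either virtually free or Fuchsian, both of which have (Ahlfors regular) conformal dimension $\leq 1$.

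Finally, apply the characterization of Carrasco--Mackay \cite{carrasco2020conformal}: a one-ended hyperbolic group has (Ahlfors regular) conformal dimension equal to $1$ precisely when it admits exactly such a hierarchical decomposition, built from Fuchsian and virtually free pieces via amalgamations and HNN extensions over virtually cyclic subgroups. This gives $\mathrm{confdim}(G)\leq 1$, contradicting the hypothesis $\mathrm{confdim}(G)>1$.

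The main subtlety, and the only real point to verify carefully, is matching the hierarchy produced by our Corollary to the exact form of the hierarchical characterization in \cite{carrasco2020conformal}. Everything else is a routine chain of implications, with the no $2$-torsion and one-ended hypotheses used respectively to apply the previous Corollary and to exclude virtually free splittings over finite edge groups.
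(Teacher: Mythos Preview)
Your proof is correct and follows exactly the approach the paper sketches in the two sentences preceding the corollary: combine the lower bound $\sep_G\succeq\log$ from \cite{hume2020poorly} (valid since $\mathrm{confdim}(G)\ge 1$), the hierarchy from the previous Corollary, and the Carrasco--Mackay characterization of hyperbolic groups of conformal dimension one. One small remark: the Stallings step is unnecessary, since the Carrasco--Mackay characterization already allows elementary (finite or two-ended) edge groups throughout the hierarchy, so you need not exclude finite edges---and indeed the one-endedness of $G$ would not rule out finite edge groups at deeper levels of the hierarchy anyway.
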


In this generality, to our knowledge this improves the previously known lower bounds.
We do not know if this is sharp.

Lower bounds on separation profiles can be obtained from Poincar\'e inequalities in the boundary at infinity of hyperbolic groups, see Hume-Mackay-Tessera~\cite[Theorem 13]{hume2020poincare}. Finding general Poincar\'e inequalities is an important challenge and this corollary can be seen as a step in this direction.

The following theorem shows that the converse of Theorem~\ref{thm: main thm} (and the subsequent corollaries) is false.

\begin{theoremA}\label{theorem: gluing tori along filling curves}
	Let~$S$ be the surface amalgam obtained by gluing two closed orientable hyperbolic surfaces along a closed filling curve in each.
	Then, $\sep_{\pi_1 S}(n)\succ \log(n)$.
\end{theoremA}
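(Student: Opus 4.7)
My plan is to show that the balls $B_R$ of radius $R$ in the universal cover $\tilde S$ already force $\sep_{\pi_1 S}(n) \succ \log n$. Since $|B_R| \asymp e^{vR}$ for the exponential growth rate $v$ of $\pi_1 S$, it suffices to prove that $\cut(B_R)/R \to \infty$ as $R \to \infty$.

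\textbf{Geometric setup.} The universal cover $\tilde S$ has the structure of a tree of hyperbolic planes $\mathbb{H}^2$, with two adjacent planes glued along a common geodesic axis (a lift of $\gamma$). The filling hypothesis on $\gamma$ means that in each copy of $\mathbb{H}^2$, the axes of lifts of $\gamma$ form a family of geodesics whose complement is a disjoint union of uniformly bounded convex tiles. In particular $\pi_1 S$ is hyperbolic, torsion-free, and one-ended; and the ratio $|B_R(\tilde S)|/|B_R(H_{v_0})|$ tends to infinity with $R$, reflecting the proliferation of attached subtrees.

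\textbf{Why $O(R)$-cost cuts fail.} For a surface group, a geodesic chord of length $2R$ splits an $R$-ball of $\mathbb{H}^2$ into two balanced halves, giving $\cut \asymp R \asymp \log n$. In $\tilde S$ the two cheap strategies both fail: (i) a chord in a single slice $H_{v_0}$ is crossed by $\asymp e^R$ lifts of $\gamma$ (by the filling density), and each crossed axis attaches a \emph{bridge} plane that reconnects the two sides, so the chord does not separate; (ii) a single axis cut disconnects the Bass--Serre tree, but the peeled-off subtree has volume only $\asymp |B_R(H_{v_0})|$, which is of strictly lower order than $|B_R(\tilde S)|$, so the resulting split is highly unbalanced. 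More generally, any separator of size $\leq cR$ affords only $O(1)$ axis cuts and a single chord cut, and these combined leave one component of volume $(1-o(1))|B_R|$—hence larger than $|B_R|/2$.

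\textbf{Main obstacle and route.} The main technical step is to rule out all hybrid $O(R)$-cost cuts quantitatively. A promising approach is to transfer the question to the boundary $\partial \pi_1 S$, which (by filling) is a tree of circles glued at dense pairs of points. This density should allow one to establish a nontrivial combinatorial $(1,p)$-Poincar\'e inequality on $\partial \pi_1 S$, after which \cite[Theorem~13]{hume2020poincare} produces a polynomial lower bound on the Poincar\'e profile of $\pi_1 S$, and in particular yields $\sep_{\pi_1 S}(n) \succ \log n$. Without the filling hypothesis---e.g.\ for a simple non-filling curve---the axis-endpoints would cluster in a proper arc of each boundary circle, the modulus estimates would degenerate, and the resulting group could have $\sep \asymp \log n$, consistent with the failure of the converse of Theorem~\ref{thm: main thm}.
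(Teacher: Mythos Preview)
Your proposal is not a proof: the step you yourself flag as the ``main obstacle''---ruling out all hybrid $O(R)$-cost separators---is left entirely open, and the route you suggest for closing it is unlikely to work. You propose to establish a combinatorial $(1,p)$-Poincar\'e inequality on $\partial \pi_1 S$ and invoke \cite[Theorem~13]{hume2020poincare}. But by Carrasco--Mackay this group has conformal dimension exactly~$1$, and the paper observes (right after the statement of the theorem) that the present result implies the conformal dimension is \emph{not attained}. Boundaries with unattained conformal dimension~$1$ are precisely the regime where the Poincar\'e-inequality machinery of \cite{hume2020poincare} degenerates: Theorem~13 there needs Ahlfors $Q$-regularity with $Q>1$ together with a $(1,p)$-Poincar\'e inequality to output a bound $\sep(n)\succeq n^{(Q-1)/Q}$, and as $Q\downarrow 1$ this says nothing. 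So your proposed transfer to the boundary would, even if it could be made precise, have to establish something strictly finer than what the existing HMT theorem consumes---that is the whole theorem, not a lemma you can cite.

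The paper's argument is entirely different and direct. Rather than the full ball $B_R(\tilde S)$, it uses a carefully chosen test set $X$: the ball $B_R$ in a single sheet $F\cong\bbH^2$, together with small balls of radius $R/3$ in each adjacent sheet $F_{\tild\gamma}$ attached along lifts $\tild\gamma$ that meet $B_{R/3}$. One checks $\vol(X)\asymp\vol(B_R)$. Given a putative separator $C$ with $\vol(C)\preceq R$, the boundary arcs of $C\cap F$ are straightened to geodesic chords $\hat\Lambda$; hyperbolic isoperimetry shows this changes component volumes by $O(R)$. Only $O(1)$ of these chords enter $B_{R/3}$, and a pigeonhole argument on radial layers produces a thin annulus $A\subset B_{R/3}$ in which the chords look like $O(1)$ nearly-radial ``walls'' that are pairwise $\succeq\sqrt R$ apart. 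Two adjacent regions $D_+,D_-$ of $A$ must lie in distinct components $E_+,E_-$ of $X-C$. Now the filling hypothesis is used concretely: the middle third $\alpha_m$ of the separating wall has length $\asymp R$ and is crossed by $\asymp R$ \emph{disjoint} lifts $\tild\gamma_i$ of $\gamma$; in each attached sheet $F_{\tild\gamma_i}$ one gets $\asymp\sqrt R$ disjoint circular arcs joining $D_+$ to $D_-$. This yields $\asymp R^{3/2}$ pairwise disjoint paths from $E_+$ to $E_-$, each of which must meet $C$, forcing $\vol(C)\succeq R^{3/2}$---a contradiction. Your intuition in item~(i) (``bridges reconnect the two sides'') is exactly right, but the content of the proof is the quantitative count of $R^{3/2}$ disjoint bridges, which requires both the geodesic-straightening/layer argument and the explicit use of filling to guarantee many disjoint transversal lifts.
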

	
	From Carrasco-Mackay~\cite{carrasco2020conformal}, such a group has conformal dimension equal to~$1$.
	
	From \cite{hume2020poincare}, a hyperbolic group with conformal dimension one always have a separation profile bounded above by any $n^{\epsilon}$, with $\epsilon>0$.
	To our knowledge, this is this is the first example of such a group whose separation profile is not logarithmic.
	This implies in particular that the conformal dimension is not attained \cite[Theorem 11]{hume2020poincare}.
	
	We believe that when the curves are not filling, the separation profile is actually~$\log$.
	
\begin{question}
	Let~$S$ be a simple surface amalgam obtained by gluing two closed hyperbolic orientable surfaces along simple curves.
	Do we have~$\sep \pi_1 S \asymp \log$?
\end{question}

From Hume-Mackay-Tessera~\cite{hume2020poincare} study of relations between conformal dimension and separation profiles, we as well can formulate the following question:

\begin{question}
	If a hyperbolic group has a separation profile bounded above by~$n^{\epsilon}$ for every positive~$\epsilon$, does it imply that it has conformal dimension one?
\end{question}

%
%

{\it Acknowledgments:}
The authors would like to thank John Mackay and Ilya Gekhtman for interesting discussions.

\section{Proof of Theorem~\ref{thm: main thm}}\label{section: proof of theorem A}

Let $G$ be a one-ended hyperbolic group. 
By abuse of notation let us denote by $G$ also the Cayley graph of $G$ with respect to some fixed finite generating set, and assume it is $\delta$-hyperbolic. We denote by $o$ the identity element of $G$.
For every $R>0$, $B_R$ (resp. $S_R$) denote the ball (resp. the sphere) of radius $R$ centred at $o$.
 
\begin{definition}
For each $R>0$, let $\pi_R : G - B_R \to S_R$ be a projection defined by $\pi_R(y) = [o,y] \cap S_R$, where $[o,y]$ is some choice of geodesic joining $o$ and $y$.

For any $\alpha>0$, we call \textbf{$\alpha$-step path} any family of vertices $v_1,\ldots,v_k$ such that $d(v_i,v_{i+1}) \le \alpha$ for any $i=1,\ldots, k-1$.\end{definition}

\begin{fact}\label{fact: thickening of spheres are connected}
For each $R>0$, $\pi_R$ is $2\delta$-Lipschitz.
As a consequence, there is a constant $\delta'$ such that the $\delta'$-neighbourhood of any sphere in $G$ is connected.
Moreover, we can choose $\delta$ large enough so that we can take $\delta'=\delta$.
\end{fact}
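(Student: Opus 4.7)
The plan is to handle the two assertions in order, both resting on the standard fellow-traveller picture for geodesic triangles in a $\delta$-hyperbolic space.

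First, for the $2\delta$-Lipschitz bound on $\pi_R$ I would reduce by concatenation along a geodesic to the case $d(y_1,y_2)=1$, so that the projections $p_i = [o,y_i]\cap S_R$ are to be shown within $2\delta$ of one another. The geodesic triangle $o,y_1,y_2$ is $\delta$-slim, so $p_1$ is within $\delta$ of a point on $[o,y_2]\cup[y_1,y_2]$. If the witness lies on $[o,y_2]$, then monotonicity of $d(o,\cdot)$ along this geodesic places it within $\delta$ of the parameter-$R$ point $p_2$, and we are done. If the witness lies on the short edge $[y_1,y_2]$, then $d(p_1,y_1)\leq\delta+1$, pinning $y_1$ within $\delta+1$ of $S_R$; running the symmetric analysis for $p_2$ either returns to the first case or forces all four points $p_1,y_1,y_2,p_2$ into a ball of radius $2\delta+O(1)$. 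This residual additive $O(1)$ is exactly what the ``choose $\delta$ large enough'' clause is designed to absorb.

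Second, the connectedness of a thickening of $S_R$ rests on the observation that any geodesic of length at most $2\delta$ with endpoints $a,b\in S_R$ lies in the $\delta$-neighbourhood of $S_R$: for a point $w$ on such a geodesic,
\[
d(w,S_R)\leq \min\bigl\{d(w,a),d(w,b)\bigr\}\leq \tfrac12 d(a,b)\leq \delta.
\]
So it is enough to produce, for any $p,q\in S_R$, a $2\delta$-step path on $S_R$ joining them. I would use one-endedness of $G$ (assumed at the start of the section) to pick neighbours $p',q'\in G\setminus B_R$ of $p,q$ lying in the unique infinite component of $G\setminus B_R$, connect them by a path $p'=u_0,u_1,\ldots,u_m=q'$ inside that component, and apply $\pi_R$ edge by edge; by the first part, $\pi_R(u_0),\ldots,\pi_R(u_m)$ is the desired $2\delta$-step path on $S_R$, with endpoints $2\delta$-close to $p,q$.

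The moreover clause is then just the remark that $\delta$-hyperbolicity is preserved under enlarging $\delta$, so the constants accrued in the Lipschitz case analysis can be folded into $\delta$, and the same enlargement legitimises the choice $\delta'=\delta$ in the connectedness conclusion. The main obstacle is cleanly controlling the additive $O(1)$ in the slim-triangle case analysis so that the Lipschitz constant reads $2\delta$ rather than $2\delta+c$; once one accepts that $\delta$ may be increased at will, everything else is a routine fellow-traveller argument.
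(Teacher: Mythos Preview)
Your overall strategy matches the paper's: deduce the Lipschitz bound from slimness of triangles, then project a path in the complement of $B_R$ down to $S_R$ to get a $2\delta$-step path, and interpolate by short geodesics to land in the $\delta$-thickening. The interpolation observation and the handling of the ``moreover'' clause are fine.

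There is, however, a genuine gap in the connectedness step. You write ``use one-endedness of $G$ \ldots\ to pick neighbours $p',q'\in G\setminus B_R$ of $p,q$ lying in the unique infinite component of $G\setminus B_R$''. One-endedness guarantees that $G\setminus B_R$ has a unique unbounded component, but it does \emph{not} guarantee that an arbitrary $p\in S_R$ has a neighbour in that component: $p$ could be a dead end (no neighbours outside $B_R$ at all), or all of its outward neighbours could lie in finite components of $G\setminus B_R$. Both phenomena occur in Cayley graphs of hyperbolic groups. What is actually needed is the Bestvina--Mess result that in a one-ended hyperbolic group there is a uniform constant $C$ such that every $p\in S_R$ is within distance $C$ of a point on a geodesic ray from $o$ (equivalently, within $C$ of the unbounded component of $G\setminus B_R$). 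The paper invokes exactly this, producing $x',y'\in S_R$ with $d(x,x'),d(y,y')\le C$ that are accessible from the infinite component; the resulting thickening constant is then $\max(\delta,C)$, which is what finally gets absorbed into $\delta$. Without this input your argument does not go through, and it is not something that can be recovered just by enlarging $\delta$ in the slim-triangle analysis.
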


\begin{proof}
The fact that the projection map $\pi_R$ is $2\delta$-Lipschitz is a straightforward consequence of the $\delta$-slimness of geodesic triangles in $G$.
Let us prove the second assertion, let us take $x,y \in S_R$.
From \cite{BestvinaMess1991}, there is some $C$ such that there exist $x',y' \in S_R$ such that $d(x,x'),d(y,y') \le C$ satisfying that $x'$ and $y'$ lie in some infinite connected component of $G - B_R$.
Since $G$ is one-ended, $G - B_R$ contains a single infinite connected component $D$, and thus $x',y' \in D$.
The vertices $x'$ and $y'$ can then be joined by a path in $G - B_R$.
We can project this path on $S_R$ using the map $\pi_R$.
Since $\pi_R$ is $2\delta$-Lipschitz, the projected path is a $2\delta$-step path in $S_R$.
It follows that the $\max(\delta,C)$-neighbourhood of any sphere of $G$ is connected.
\end{proof}

In the sequel, we will assume that $\delta$ is chosen large enough so that the spheres of $G$ are $\delta$-connected.
This motivates the following definition.

\begin{definition}
    We say that $G$ has \textbf{bounded spheres separation} if every $\delta$-neighbourhood of a sphere has a cut-set of uniformly bounded size.
\end{definition}

We are now able to state our key lemma.

\begin{lemma}\label{lem: log sep to bdd sphere sep}
    If $G$ is hyperbolic and $\sep_G(n) \preceq \log(n)$, then $G$ has bounded spheres separation.
\end{lemma}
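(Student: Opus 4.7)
The plan is to follow the outline sketched in the introduction: for each sphere $\tilde{S}_R$, produce a bounded separating set by restricting to the sphere a separating set of a suitable annulus in $G$, using the log separation profile together with a pigeonhole over radii.

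Fix $R$ large and a width $T$ to be chosen proportional to $R$, and set $A := \bigcup_{r \in [R-T,\, R+T]} \tilde{S}_r$. By exponential volume growth in $G$, one has $|A| \leq c_0 \lambda^{R+T}$ for constants $c_0 > 0$ and $\lambda > 1$, so applying the hypothesis $\sep_G(n) \preceq \log n$ to the induced subgraph on $A$ produces a separating set $\Sigma \subseteq A$ with $|\Sigma| \leq D\log|A| + D \leq D'(R+T)$ such that the components of $A \setminus \Sigma$ have size at most $|A|/2$. Each vertex of $\Sigma$ belongs to at most $2\delta+1$ of the sets $\tilde{S}_r$, hence $\sum_r |\Sigma \cap \tilde{S}_r| \leq (2\delta+1)|\Sigma|$, and the pigeonhole principle gives some $r_0 \in [R-T, R+T]$ with $|\Sigma \cap \tilde{S}_{r_0}| \leq (2\delta+1)D'(R+T)/(2T+1)$; for $T$ linear in $R$ this bound is an absolute constant. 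Since $\tilde{S}_{r_0} \setminus (\Sigma \cap \tilde{S}_{r_0}) \subseteq A \setminus \Sigma$, any path in $\tilde{S}_{r_0}$ avoiding $\Sigma \cap \tilde{S}_{r_0}$ is a path in $A \setminus \Sigma$, so the partition of $A \setminus \Sigma$ into two halves restricts to a separation of $\tilde{S}_{r_0}$ by $\Sigma \cap \tilde{S}_{r_0}$.

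The main obstacles are balance and uniformity. For balance, the cut of $A$ is balanced with respect to $|A|$ rather than $|\tilde{S}_{r_0}|$: a half of $A \setminus \Sigma$ may have size up to $|A|/2$, which can exceed $|\tilde{S}_{r_0}|/2$ unless $r_0$ lies near the outer edge $R+T$, where the geometric-series bound yields $|\tilde{S}_{r_0}| \asymp |A|$. Passing to a $\beta$-balanced version of the cut (equivalent up to constants) with $\beta$ small enough should rectify this on the sphere. For uniformity, the pigeonhole only produces a bounded cut for \emph{some} $r_0 \in [R-T, R+T]$ rather than for the prescribed $R$; to get bounded cut for every sphere, one must either vary $R$ and $T$ so as to realise every target radius as some $r_0$, or propagate the bound to neighbouring spheres through the $2\delta$-Lipschitz projection $\pi_r$. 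Controlling the potential size inflation of lifts through this many-to-one projection is where the main technical difficulty lies, and I expect this uniformity step to be the principal obstacle.
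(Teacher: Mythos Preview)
Your outline has the right starting move---cut an annulus of width $\asymp R$ and use $\sep_G \preceq \log$ to get a cut set of size $O(R)$---but the two obstacles you flag are not merely technical, and in fact the balance issue is already fatal to the scheme as written.

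The tension is this. To convert the bound $|E| \le \beta|A|$ on a component $E$ of $A \setminus \Sigma$ into a bound $|E \cap \tilde S_{r_0}| \le \tfrac12 |\tilde S_{r_0}|$, you need $|\tilde S_{r_0}| \ge 2\beta |A|$. Since growth is exponential and $|A|$ is dominated by its outermost spheres, this forces $r_0$ to lie within $O_\beta(1)$ of the outer radius $R+T$. But with only $O(1)$ admissible values of $r_0$, the pigeonhole bound on $|\Sigma \cap \tilde S_{r_0}|$ degrades to $|\Sigma|/O(1) \asymp R$, not a constant. No choice of $\beta$ or $T$ resolves this: you cannot simultaneously have $\asymp R$ spheres to pigeonhole over and have each of those spheres occupy a definite fraction of $|A|$. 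The uniformity problem you call the ``principal obstacle'' is secondary; the argument breaks before you reach it.

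The paper's proof sidesteps this by \emph{not} intersecting the cutset with a sphere inside the annulus. It takes the annulus $A(R)=B_{3R}\setminus B_{2R}$ and works on the much smaller sphere $S_R$ sitting \emph{below} it. The candidate separator $P\subset S_R$ consists of those $x$ whose thickened radial shadow meets $C$ at \emph{every} level $r\in[2R,3R]$; since each such $x$ accounts for $\asymp R$ points of $C$, a counting argument bounds $|P|$ uniformly. Balance on $S_R$ is then obtained through the Shadow Lemma: for $D\subset S_R$ the sector $\Sigma_D^A$ above $D$ has $|\Sigma_D^A|\asymp \alpha^R|D|$, so one can lift a connected component $D$ of $(S_R)_\delta\setminus P_\delta$ to a connected piece $T_D$ of $A\setminus C$, bound $|T_D|\le \beta|A|$, and pull the bound back to $|D|$. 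This two-way size comparison via shadows is exactly the ingredient your sphere-intersection strategy lacks, and there does not seem to be a way to repair it without introducing something equivalent.
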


Before proving this lemma, let us give few definitions and a fact that we will use in the proof.
We shall denote $(S_R)_\delta$ the $\delta$-neighbourhood of the sphere $S_R$.
Moreover, $A(R)$ (or simply $A$) denotes the annulus $B_{3R} - B_{2R}$.

\begin{definition}
Given $x\in S_R$ and $r>0$, let $\Sigma_x = \pi_R \ii (x)$ and $\Sigma_{x,r} = \pi_R \ii (B(x,r))$ be respectively the \textbf{shadow}, and the \textbf{$r$-shadow} of $x$ in $G$.

Let us write the \textbf{sector} $\Sigma^A_x = \Sigma_x \cap A$ and \textbf{$r$-sector} $\Sigma^A_{x,r} = \Sigma_{x,r} \cap A$.
Let finally $\Sigma_x(r') = \Sigma_x \cap (S_{r'})_\delta$ for each $r' \ge R$ and $\Sigma^A_D = \bigcup_{x\in D} \Sigma^A_x$ for each $D \subseteq S_R$ (see Figure \ref{figure: shadows and sectors}).
\end{definition}

\begin{figure}\caption{Shadows and sectors}\label{figure: shadows and sectors}
\begin{center}
\includegraphics[scale=1]{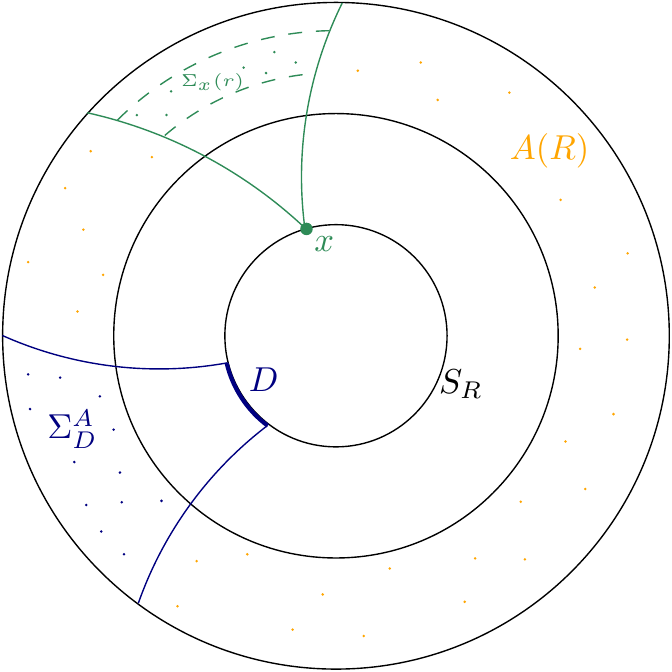}
\end{center}
\end{figure}

\begin{fact}\label{fact: size of sectors}

There are constants~$K,\alpha>0$ such that for all~$R \gg 0$ and $D\subseteq S_R$ we have
\[K^{-1} \alpha^R |D| \le |\Sigma^A_D| \le K \alpha^R |D|.\] 
In particular,
\[K^{-1} \alpha^R |S_R| \le |A(R)| \le K \alpha^R |S_R|.\]
\end{fact}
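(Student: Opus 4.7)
The plan is to reduce the set-wise statement to a pointwise one. If one can show that $|\Sigma^A_x| \asymp \alpha^R$ uniformly over all $x \in S_R$ (with $R$ large), then summing over $x \in D$ and using that the sectors are pairwise disjoint, once a consistent choice of geodesic is fixed in the definition of $\pi_R$, yields both bounds on $|\Sigma^A_D|$. The particular case $D = S_R$, combined with Coornaert's uniform sphere growth estimate for non-elementary hyperbolic groups, then gives the bound on $|A(R)|$.

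Concretely, let $\beta > 1$ denote the exponential growth rate of $G$ and set $\alpha := \beta^2$. Coornaert's theorem supplies a constant $c_0 > 0$ with $c_0\ii \beta^R \le |S_R| \le c_0 \beta^R$ for every $R$. For the upper bound on a single sector, note that any $y \in \Sigma^A_x$ forces $d(x,y) = d(o,y) - R \in [R, 2R]$, since some geodesic $[o,y]$ passes through $x$; hence $\Sigma^A_x \subseteq B(x, 2R)$, and left-translating by $x\ii$ gives $|\Sigma^A_x| \le |B_{2R}| \le C \beta^{2R} = C \alpha^R$ for a suitable constant $C$.

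The matching lower bound $|\Sigma^A_x| \ge K\ii \alpha^R$ is the main obstacle. The approach would be to combine Cannon's finite-cone-type theorem with the one-endedness of $G$ to show that every infinite cone in $G$ has full exponential growth: there exists $c_2 > 0$ such that for $R$ large, every $x \in S_R$, and every $r \in [R, 2R]$,
\[
|\{y \in S_{R+r} : \pi_R(y) = x\}| \ge c_2 \beta^r.
\]
Summing the geometric series over $r \in [R,2R]$ would then yield $|\Sigma^A_x| \ge c_3 \beta^{2R} = c_3 \alpha^R$, completing the pointwise estimate; the delicate point is ruling out cone types with sub-exponential growth, which uses non-elementarity of $G$ together with a Perron--Frobenius style argument on the finite directed graph of cone types. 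Summation of these pointwise bounds over $x \in D$ then yields the first displayed inequality, and the special case $D = S_R$ together with $|S_R| \asymp \beta^R$ concludes the particular statement about $|A(R)|$.
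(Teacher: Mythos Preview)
Your route is genuinely different from the paper's. The paper gives no argument beyond citing the Shadow Lemma for Patterson--Sullivan measures (Coornaert, with the combinatorial formulation in Gou\"ezel--Math\'eus--Maucourant), which packages the two-sided estimate $|\Sigma^A_x|\asymp\alpha^R$ via the quasi-conformal density on $\partial G$. Your approach through Cannon's finite cone types and a Perron--Frobenius argument on the cone-type automaton is more elementary and stays entirely inside the group; the reduction to a pointwise estimate, the identification $\alpha=\beta^2$, and the upper bound via $\Sigma^A_x\subset B(x,2R)$ are all fine.

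There is, however, a gap in the lower bound that you should close. Cannon's cone at $x$ is the set of $y$ for which \emph{some} geodesic $[o,y]$ passes through $x$, and it is this set whose growth the automaton controls. By contrast $\Sigma_x=\pi_R^{-1}(x)$ consists of those $y$ whose \emph{chosen} geodesic passes through $x$; for an arbitrary choice of $\pi_R$ these may differ, since nothing prevents the chosen geodesics from systematically avoiding $x$ in favour of one of its $O(\delta)$ neighbours on $S_R$. Thus the displayed inequality $|\{y\in S_{R+r}:\pi_R(y)=x\}|\ge c_2\beta^r$ does not follow from the cone-type growth without further input. The clean fix is to define $\pi_R$ via a prefix-closed geodesic combing (shortlex, say); then each $\Sigma_x$ is literally a cone in the associated automatic structure, the finitely many cone types all feed into the maximal-growth component by non-elementarity, and your Perron--Frobenius step applies directly. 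Since the paper only fixes ``some choice'' of geodesics, you are free to make this one --- but you must say so explicitly, as your sketch currently conflates the two notions of cone. With that clarification your argument is complete and gives a self-contained alternative to the Shadow Lemma citation.
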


\begin{proof}
This follows from the proof of the Shadow Lemma, see Coornaert~\cite[Proposition 6.1]{coornaert1993mesures} (see also Gouëzel-Mathéus-Maucourant~\cite[Lemma 2.13]{gouezel2018entropy}). 
\end{proof}

\begin{proof}[Proof of Lemma \ref{lem: log sep to bdd sphere sep}]
It follows from Fact \ref{fact: size of sectors} that $|A(R)| \asymp \exp(R)$.
Let~$C \subset A(R)$ such that each connected component of~$A(R) - C$ contains at most~$\beta |A(R)|$ vertices, with $\beta = \left({4 |B_{8\delta}|^3 K^3}\right)^{-1}$, where $K$ is given by Fact \ref{fact: size of sectors}.
From the assumption that $\sep_G \preceq \log$, we can suppose~$|C|\preceq \log (|A(R)|) \preceq  R$.
Concretely, let $|C|\le cR$ for some~$c$.

Let $P\subset S_R$ be the set of all $x$ such that $\Sigma_{x,6\delta}(r) \cap C \ne \emptyset$ for all $2R \le r \le 3R$.
If $x\in P$ then $|\Sigma^A_x \cap C|\ge R/12\delta$.
Since the $6\delta$-sectors of pairs of vertices that are $12 \delta$ apart are disjoint, it follows that $|P| \le 12 \delta c |B_{12 \delta}|$.
So, there is a uniform bound on the size of $P_\delta$, the $\delta$-neighbourhood of $P$.
It remains to show that $P_\delta$ separates $(S_R)_\delta$.

\begin{claim}\label{claim: size of component in sector}
    There exists $K'>0$ such that for every $R \gg 0$, if $x\in S_R - P$ then $\Sigma^A_x-C$ has a component $T_x$ of size $|T_x| \ge \frac 1 {2K'} |\Sigma^A_x|$.
\end{claim}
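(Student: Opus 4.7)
The strategy is to use the gap at level $r_0$ (provided by $x \in S_R - P$) to channel most of $\Sigma^A_x$ into a single component of $\Sigma^A_x - C$, namely the one containing the ``rendezvous slice'' $\Sigma_x(r_0)$. I would fix $r_0 \in [2R, 3R]$ with $\Sigma_{x, 6\delta}(r_0) \cap C = \emptyset$ (which exists since $x \notin P$), and call $y \in \Sigma^A_x$ \emph{good} if $|y| \ge r_0$ and the radial segment $[y_{r_0}, y]$ along the fixed geodesic $[o, y]$ (used in defining $\pi_R$) is disjoint from $C$; here $y_{r_0}$ denotes the vertex of $[o, y]$ at level $r_0$, which lies in $\Sigma_x(r_0) \subseteq \Sigma^A_x - C$. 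Each good $y$ is thereby $G$-path-connected in $\Sigma^A_x - C$ to $y_{r_0}$ via its radial segment.

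To bound the bad vertices, observe that a bad $y$ has some $c \in C \cap \Sigma^A_x$ at level $s \in [r_0, |y|]$ on its fixed geodesic, so $y$ lies among the descendants of $c$ in $A$. Using the exponential growth rate $\mu$ of the group, this set has size $O(\mu^{3R - s}) \le O(\mu^R)$, since $s \ge r_0 \ge 2R$. Summing over the $|C| \le cR$ possible obstructions yields $O(R \mu^R)$ bad vertices, which is $o(|\Sigma^A_x|)$ because $|\Sigma^A_x| \asymp \alpha^R$ with $\alpha = \mu^2$ by Fact~\ref{fact: size of sectors}. Noting also that $|\{y \in \Sigma^A_x : |y| \ge r_0\}| \asymp |\Sigma^A_x|$ (dominated by the top-level slice), the good vertices form at least a constant fraction of $\Sigma^A_x$ for $R$ large.

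Finally, to see that these good vertices mostly share a single component of $\Sigma^A_x - C$: the $6\delta$-thickened slice $\Sigma_{x, 6\delta}(r_0)$ is $\delta$-connected (via a local analog of Fact~\ref{fact: thickening of spheres are connected} applied to its projection onto $S_{r_0}$) and avoids $C$, so it lies in a single component of $A - C$. Transferring this to the narrower set $\Sigma_x(r_0) \subseteq \Sigma^A_x$ via hyperbolicity of descendants from $x$, one concludes that $\Sigma_x(r_0)$ sits in boundedly many components of $\Sigma^A_x - C$, and averaging yields a dominant component $T_x$ with $|T_x| \ge |\Sigma^A_x|/(2K')$. The main obstacle is this last transfer: the connecting paths in $\Sigma_{x, 6\delta}(r_0)$ may pass through vertices projecting to $B(x, 6\delta) \setminus \{x\}$ and thus lie outside $\Sigma_x$, so a careful use of hyperbolicity is required to ensure $\Sigma_x(r_0)$ does not fragment into too many components of $\Sigma^A_x - C$.
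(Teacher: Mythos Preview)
Your overall strategy --- fix a clean level $r_0$ and funnel most of the sector into the component meeting that slice --- is the right one, and the counting of ``bad'' vertices via descendants of points of $C$ is fine. The genuine gap is exactly the step you flag at the end: you assert that $\Sigma_x(r_0)$ lies in \emph{boundedly many} components of $\Sigma^A_x - C$, but this is not justified and is in fact the heart of the matter. Even before removing $C$, the strict sector $\Sigma^A_x$ can be badly disconnected: with tree-like geodesic choices it is essentially a forest whose roots are the $\asymp \mu^R$ points of $\Sigma_x(2R)$, and the occasional horizontal $G$-edges between branches have no reason to join vertices with identical $\pi_R$-image. So $\Sigma_x(r_0)$ may already split into $\asymp \mu^R$ pieces inside $\Sigma^A_x$, and your averaging step yields nothing.

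The paper's fix is precisely to abandon the strict sector and work in the \emph{thickened} sector $\Sigma^A_{x,6\delta}$ throughout: it defines $T_x$ as a component of $\Sigma^A_{x,6\delta}-C$ (not of $\Sigma^A_x-C$; the statement of the claim is slightly loose on this point, but only the thickened version is used later). In $\Sigma^A_{x,6\delta}$ the connectivity goes through cleanly: by slimness, any two points of $\Sigma_{B_{2\delta}(x)}(r_0)$ are joined inside $\Sigma_{x,6\delta}(r_0)$, which misses $C$ by the choice of $r_0$, so that whole slice sits in a single $T_x$. The bookkeeping is also organized differently: instead of bad radial segments, the paper looks at base points $y\in\Sigma_{x,2\delta}(2R)$ and calls $y$ bad if $\Sigma^A_{y,4\delta}\cap C\neq\emptyset$; there are only $O(R)$ such $y$, and for every good $y$ the entire $\Sigma^A_y$ is $C$-free, connected inside $\Sigma^A_{x,6\delta}$ (again by slimness), and meets the slice $\Sigma_{B_{2\delta}(x)}(r_0)$, hence lies in $T_x$. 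Summing over good $y$ gives the size bound. If you replace your ambient set $\Sigma^A_x$ by $\Sigma^A_{x,6\delta}$, your radial-segment argument can also be made to work, but as written the restriction to the strict sector is what breaks.
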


\begin{proof}
    Since $x \notin P$, there exists $r\in [2R,3R]$ such that $\Sigma_{x,6\delta}(r) \cap C= \emptyset$.
    \begin{fact}\label{fact: sigma_x is (almost) connected}
    $\Sigma_{B_{2\delta}(x)}(r)$ is contained in a single connected component of $\Sigma^A_{x,6\delta} -C$
    \end{fact}
    \begin{proof}
	Given two points $y_1,y_2 \in \Sigma_{x,2\delta}(r)$, if $z$ is a vertex on a geodesic joining $y_1$ and $y_2$, then it is at distance at most $\delta$ from a point $z' \in [o,y_1] \cup [o,y_2]$. The $\delta$-slimness of the triangle given by $o, z$ and $z'$ implies that $\pi_R(z)$ is at distance at most $4\delta$ from $x$.
	Projecting the geodesic $[y_1,y_2]$ using $\pi_r$ gives a $2\delta$-step path joining $y_1$ and $y_2$ in $\Sigma_{x,4\delta} \cap S_r$.
	By proceeding to an interpolation process, we obtain a path joining $y_1$ and $y_2$ in $\Sigma_{x,6\delta} \cap (S_r)_\delta = \Sigma_{x,6\delta}(r)$. 
    Since $\Sigma_{x,6\delta}(r)$ do not meet $C$ and is included in $A$, this means exactly that $\Sigma_{x,2\delta}(r)$ is contained in a single connected component of $\Sigma^A_{x,6\delta} - C$. 
    \end{proof}
    Let $T_x$ be the component of $\Sigma^A_{x,6\delta} -C$ that contains $\Sigma_{B_{2\delta}(x)}(r)$.
    We shall consider shadows of vertices in $\Sigma_x(2R)$ and their intersection with $A$.
    In a similar way as in Fact \ref{fact: size of sectors}, we have a constant $K'$ such that any subset $Q \subset \Sigma_x(2R)$ satisfies 
   \begin{align}
	\label{equation: comparing Q and Sigma}
    (K')^{-1} \frac{\Sigma_Q^A}{\Sigma_x^A} \le \frac{|Q|}{\Sigma_x(2R)} \le K' \frac{\Sigma_Q^A}{\Sigma_x^A}  
    \end{align}
        Let $Q_x$ be the collection of all points $y\in \Sigma_{x,2\delta}(2R)$ such that $\Sigma_{y,4\delta}^A \cap C \ne \emptyset$.
    Since $|C| \le cR$ it follows that $|Q_x| \le c |B_{4\delta}| R \asymp R $.
    Since $|\Sigma_{B_{2\delta}(x)}(2R)| \asymp \exp(R)$, the complementary set $Q_x^c = \Sigma_{B_{2\delta}(x)}(2R) - Q_x$ satisfies
    \begin{align}
    	\label{equation: Q_x is small}
|Q_x^c| \ge \frac 1 2 |\Sigma_x(2R)|,
    \end{align}
 for any large enough $R$.
    
    For points $y \in Q_x^c$, in a similar manner as in Fact \ref{fact: sigma_x is (almost) connected}, $\Sigma_y^A$ is contained in a single connected component of $\Sigma^A_{x,6\delta}-C$ and intersects $\Sigma_{B_{2\delta}(x)}(r)$.
    Thus, $\Sigma_y^A \subseteq T_x$.
    Therefore we have $\Sigma_{Q_x^c}^A = \bigcup_{y \in Q_x^c} \Sigma_y^A \subset T_x$, which implies $|T_x| \ge \frac 1 {2K'} \Sigma_x^A$ from \eqref{equation: comparing Q and Sigma} and \eqref{equation: Q_x is small}. 
    \end{proof}

Let $D'\subset (S_R)_\delta-P_\delta$ be a connected subset.
We need to show that~$|D'| \le \frac {|(S_R)_\delta|} 2 $.
Let $D$ be the set of elements of $S_R$ that are at distance at most $\delta$ from $D'$.
Recall that $\Sigma^A_D = \bigcup_{x\in D} \Sigma^A_x$, and define similarly $T_D = \bigcup_{x\in D} T_x$, with $T_x$ given by Claim \ref{claim: size of component in sector}.

\begin{claim}\label{claim: components above components}
    $T_D$ is connected in $A(R)-C$.
\end{claim}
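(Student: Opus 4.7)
The plan is to reduce the claim to the local statement that, for any two $x,x'\in D$ with $d(x,x')\le 2\delta+1$, the sets $T_x$ and $T_{x'}$ lie in the same connected component of $A(R)-C$. Granting this, let $x,x'\in D$ be arbitrary; by definition of $D$ they are within $\delta$ of some points $y,y'\in D'$, and since $D'$ is connected in $(S_R)_\delta-P_\delta$ there is a unit-step path $y=y_0,y_1,\ldots,y_k=y'$ inside $D'$. Choosing a nearest sphere-point $x_j\in S_R$ (necessarily in $D$, and disjoint from $P$) for each $y_j$, the triangle inequality gives $d(x,x_0),\ d(x_j,x_{j+1}),\ d(x_k,x')\le 2\delta+1$, and repeated application of the local statement puts $T_x,T_{x_0},\ldots,T_{x_k},T_{x'}$ into the same component of $A(R)-C$. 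Since $x,x'\in D$ were arbitrary, $T_D$ is connected.

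For the local statement I would prove the stronger fact that $T_x\cap T_{x'}\ne\emptyset$, by mimicking the strategy of the proof of Claim~\ref{claim: size of component in sector}. Consider the intermediate sphere $S_{2R}$ and the overlap
\[W=\Sigma_{B_{3\delta}(x)}(2R)\cap \Sigma_{B_{3\delta}(x')}(2R).\]
Since $d(x,x')\le 2\delta+1\le 3\delta$, the point $x$ itself lies in $B_{3\delta}(x)\cap B_{3\delta}(x')\cap S_R$, so $\Sigma_x(2R)\subseteq W$ and hence $|W|\asymp\alpha^R$ by Fact~\ref{fact: size of sectors}. The ``bad'' subset $\{y\in W:\Sigma^A_{y,4\delta}\cap C\ne\emptyset\}$ has size at most $|C|\cdot|B_{4\delta}|\preceq R$, since each $z\in C$ witnesses at most $|B_{4\delta}|$ such $y$. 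For $R$ large, there is thus $y\in W$ with $\Sigma^A_{y,4\delta}\cap C=\emptyset$; the argument of Claim~\ref{claim: size of component in sector}, applied at both $x$ and $x'$, then gives $\Sigma^A_y\subseteq T_x\cap T_{x'}$.

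The main obstacle is a minor bookkeeping issue: consecutive sphere-projections are at distance up to $2\delta+1$, one more than the $B_{2\delta}$-core used in the definition of $T_x$. This is handled by slightly widening the constants in the proof of Claim~\ref{claim: size of component in sector}---replacing $B_{2\delta}$ by $B_{3\delta}$ and $\Sigma_{x,6\delta}$ by $\Sigma_{x,8\delta}$, say, throughout---which leaves its conclusion intact. Once this is in place, the substance of the argument is purely the exponential-versus-linear comparison $|W|\gg|Q|$, which closes at all sufficiently large $R$.
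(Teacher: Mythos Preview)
Your proposal is correct and follows essentially the same strategy as the paper: reduce to nearby pairs $x,x'\in D$, find a common point $y$ in the overlap of their $2R$-shadows that avoids the ``bad'' set where $\Sigma^A_{y,4\delta}$ meets $C$, and conclude that $\Sigma^A_y\subseteq T_x\cap T_{x'}$. The only difference is in the reduction step: the paper simply asserts that since $D_\delta$ is connected it suffices to treat pairs with $d(x,x')\le\delta$, which lets it reuse the exact constants from Claim~\ref{claim: size of component in sector} (the overlap $\Sigma_{x,2\delta}(2R)\cap\Sigma_{x',2\delta}(2R)$ is already large, and $y\notin Q_x\cup Q_{x'}$ gives $\Sigma^A_y\subseteq T_x\cap T_{x'}$ without any widening). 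Your version routes the chain explicitly through the connected set $D'$ and its sphere-projections, which is more careful but yields step size $2\delta+1$ and hence the mild constant inflation you note. Both arguments are the same exponential-versus-linear comparison at heart.
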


\begin{proof}
	Since $D_\delta$ is connected, it suffices to show that for any $x,x' \in D$ at distance at most $\delta$ from each other, $T_x$ and $T_{x'}$ are connected in $A(R) - C$.
	
    Let then $x,x' \in D$ be such that $d(x,x') \le \delta$.
    Then $\Sigma_{x,2\delta}(2R) \cap \Sigma_{x',2\delta}(2R)$ contains $\exp(R)$ points.
    Repeating the argument of the previous claim we see that, if $R$ is large enough, one of those points $y\in \Sigma_{x,2\delta}(2R) \cap \Sigma_{x',2\delta}(2R)$ is not in $Q_x$ nor in $Q_{x'}$.
    As above, $\Sigma_y^A$ is included in a single connected component of $A(R)-C$ and is also included in $T_x$ and $T_{x'}$.
    Thus $T_x$ and $T_{x'}$ are in the same connected component of $A(R)-C$.
\end{proof}

    By assumption on~$C$, this implies that we have 
\begin{align}
\label{eq: TD is small}
    |T_D| \le \beta |A(R)| = \frac{|A(R)|}{4|B_{8\delta}|^3 K^3}.
\end{align}   

\begin{claim}
We have
\begin{align}\label{eq: evaluating TD on Tx''s}
\sum_{x \in D} |T_x| 
\le |B_{8 \delta}||T_D|.
\end{align}    
\end{claim}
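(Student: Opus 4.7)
The plan is to prove \eqref{eq: evaluating TD on Tx''s} by a straightforward double counting argument, bounding the multiplicity with which a single vertex $z \in T_D$ can lie in the sets $T_x$ as $x$ varies over $D$.

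The key observation is that $T_x$ sits inside the $6\delta$-sector of $x$. Indeed, by its very definition $T_x$ is a connected component of $\Sigma^A_{x, 6\delta} - C$, so in particular $T_x \subseteq \Sigma^A_{x, 6\delta}$. Unwinding the definition of the $6\delta$-shadow, if $z \in T_x$ then $\pi_R(z) \in B(x, 6\delta)$, which we rewrite as $x \in B(\pi_R(z), 6\delta)$. Since $x$ is further constrained to lie in $S_R \supseteq D$, the number of $x \in D$ for which $z \in T_x$ is at most $|B(\pi_R(z), 6\delta)| \le |B_{6\delta}| \le |B_{8\delta}|$.

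Exchanging the order of summation then yields
\[
\sum_{x \in D} |T_x| \;=\; \sum_{z \in T_D} \bigl|\{\, x \in D : z \in T_x \,\}\bigr| \;\le\; |B_{8\delta}|\,|T_D|,
\]
which is exactly \eqref{eq: evaluating TD on Tx''s}. I do not expect any real obstacle: the argument is purely combinatorial once the inclusion $T_x \subseteq \Sigma^A_{x, 6\delta}$ is noted. The explicit constant $|B_{8\delta}|$ is not sharp (the bound $|B_{6\delta}|$ would suffice), but is convenient for matching the choice of $\beta = (4|B_{8\delta}|^3 K^3)^{-1}$ fixed at the start of the proof of Lemma \ref{lem: log sep to bdd sphere sep}, which is presumably how this estimate will combine with \eqref{eq: TD is small} in the final step bounding $|D'|$.
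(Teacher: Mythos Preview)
Your argument is correct and in fact simpler than the paper's. The paper does not use double counting; instead it builds a map $\phi:D\to D$ by a greedy procedure: pick $x\in D$ maximizing $|T_x|$, send every $x'$ with $T_{x'}\cap T_x\ne\emptyset$ to $x$, delete these $x'$, and iterate. This yields a map with fibers of bounded size and with the $T_{\phi(x)}$ pairwise disjoint, from which the inequality follows. Your approach bypasses this construction entirely by observing directly that any point $z$ can lie in $T_x$ only when $x\in B(\pi_R(z),6\delta)$, so the multiplicity in the sum $\sum_x |T_x|$ is at most $|B_{6\delta}|\le |B_{8\delta}|$. The double-counting route is shorter and gives a slightly better constant; the paper's route has the minor conceptual advantage of exhibiting an explicit subfamily of pairwise disjoint $T_y$'s carrying most of the mass, though that extra information is not used downstream.
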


\begin{proof}
Let us show that there exists a map $\phi:D\to D$ such that 
\begin{itemize}
    \item $d(x,\phi(x)) \le 8\delta$ and in particular $|\phi\ii (\phi(x))|\le |B_{8\delta}|$,
    \item $|T_x| \le |T_{\phi(x)}|$, and
    \item if $y\ne y' \in \im \phi$ then $T_{y} \cap T_{y'} = \emptyset$.
\end{itemize} 

Assuming we have constructed such a map, the claim follows by
\begin{align}
\sum_{x \in D} |T_x| \le |B_{8 \delta}|\sum_{y \in \im \phi}|T_{y}|
\le |B_{8 \delta}||T_D|
\end{align}   

To construct the map $\phi$, let $x\in D$ be a point maximizing $|T_x|$. 

Let $Z\subseteq D$ be the collection of all points $x'\in D$ satisfying $T_{x'} \cap T_x \ne \emptyset$. 
Define $\phi$ on $Z$ by $\phi(x')=x$.
Note that each $x'\in Z$ is at distance $d(x,x')\le 8\delta$, and by the choice of $x$, $|T_{x'}| \le |T_x|$.

Remove all the points in $Z$ from $D$, and iterate the construction above until $\phi$ is defined on all $D$.
\end{proof}

    Without any loss of generality we can assume that we have $K>K'$.
	We deduce:
    \begin{align*}
|D'|  &\le |B_{8\delta}| |D|
\\&\le |B_{8\delta}| K \alpha^{-R} |\Sigma_D^A|\quad\text{from Fact \ref{fact: size of sectors}}
\\&\le |B_{8\delta}| K \alpha^{-R} \sum_{x \in D}|\Sigma_x^A|
\\&\le 2|B_{8\delta}| K^2 \alpha^{-R} \sum_{x \in D}|T_x|\quad\text{from Claim \ref{claim: size of component in sector}, assuming $K>K'$}
\\&\le 2|B_{8\delta}|^2K^2 \alpha^{-R} |T_D|\quad\text{from \eqref{eq: evaluating TD on Tx''s}}
\\&\le \frac 1 {2K|B_{8\delta}|} \alpha^{-R} |A(R)|\quad\text{from \eqref{eq: TD is small}}
\\&\le \frac 1 2 \frac{|S_R|}{|B_{8\delta|}}\quad\text{from Fact \ref{fact: size of sectors}}
\\&\le \frac 1 2 |(S_R)_\delta|
    \end{align*}
    This ends the proof of Lemma \ref{lem: log sep to bdd sphere sep}.
\end{proof}

\begin{definition}
    Let $X$ be a connected topological space. We say that a subset $F$ \textbf{topologically separates} $X$ if $X-F$ is not connected.
\end{definition}

\begin{lemma}\label{lem: bdd sphere sep to sep of boundary}
    If $G$ has bounded spheres separation, then $\partial G$ has a finite topologically separating set.
\end{lemma}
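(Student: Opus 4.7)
The plan is to construct a finite set $F\subset\partial G$ as a subsequential limit of the bounded cut-sets $P_R$ in the Gromov compactification $G\cup\partial G$, and then show via a Hausdorff-limit argument on the components of $(S_R)_\delta-P_R$ that $\partial G-F$ is disconnected.

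Using the hypothesis, I would pick for each $R$ a cut-set $P_R\subset(S_R)_\delta$ with $|P_R|\le N$. Passing to a subsequence $R_n\to\infty$, I may assume $|P_{R_n}|=m$ is constant, $P_{R_n}=\{p_1^n,\dots,p_m^n\}$, and (by compactness of $G\cup\partial G$) each $p_i^n$ converges to some $\xi_i\in\partial G$; set $F=\{\xi_1,\dots,\xi_m\}$. Since $G$ has bounded valence $d$, the graph $(S_{R_n})_\delta-P_{R_n}$ has at most $Nd$ connected components, each containing at most half the vertices. By Fact~\ref{fact: size of sectors}, $|(S_{R_n})_\delta|$ grows exponentially in $R_n$, so letting $U_n$ be the largest component and $V_n$ the union of the remaining ones, both $|U_n|$ and $|V_n|$ are of order $\exp(R_n)$.

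I would then define the shadows $\hat U_n=\{\xi\in\partial G:[o,\xi)\cap S_{R_n}\in U_n\text{ for some ray }[o,\xi)\}$, and similarly $\hat V_n$ and $\hat P_n$; these cover $\partial G$. Pass to a further subsequence along which $\hat U_n,\hat V_n$ Hausdorff-converge to closed sets $\hat U,\hat V\subset\partial G$. Since $p_i^n\to\xi_i$, the Hausdorff limit of $\hat P_n$ lies in $F$, so $\hat U\cup\hat V\cup F=\partial G$. The heart of the argument is to prove $\hat U\cap\hat V\subset F$. For $\xi\in\hat U\cap\hat V$, choose $\xi'_n\in\hat U_n$, $\xi''_n\in\hat V_n$ with $\xi'_n,\xi''_n\to\xi$; then $(\xi'_n\mid\xi''_n)_o\to\infty$, so by hyperbolicity $d(v'_n,v''_n)\le 2\delta$ for $n$ large, where $v'_n:=[o,\xi'_n)\cap S_{R_n}\in U_n$ and $v''_n:=[o,\xi''_n)\cap S_{R_n}\in V_n$. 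Any $G$-geodesic from $v'_n$ to $v''_n$ has length $\le 2\delta$ and every vertex on it is within $\delta$ of $\{v'_n,v''_n\}\subset S_{R_n}$, hence lies in $(S_{R_n})_\delta$; as it goes from $U_n$ to $V_n$, which are separated by $P_{R_n}$ in $(S_{R_n})_\delta$, it must meet some $p_{i(n)}^n\in P_{R_n}$ within distance $2\delta$ of $v'_n$. After extracting a subsequence with $i(n)=i$ constant, $p_i^n\to\xi_i$ and $v'_n\to\xi$ forces $\xi=\xi_i\in F$.

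To see $\hat U\setminus F\ne\emptyset$ (symmetrically for $\hat V\setminus F$), a counting argument along the lines of Fact~\ref{fact: size of sectors} shows that for $T$ large and fixed, the visual $e^{-T}$-neighborhood of each $\xi_i$ meets $S_{R_n}$ in a set of size $\asymp\exp((R_n-T)h)$ (where $h$ is the exponential growth rate), which is $\ll|U_n|$; one may thus pick $u^n\in U_n$ with $(u^n\mid\xi_i)_o\le T$ for every $i$. Extending a geodesic ray through $u^n$ to a boundary point $\eta_n$ and passing to a subsequential limit $\eta_n\to\xi$, one has $\xi\in\hat U$, and $(\xi\mid\xi_i)_o\le T+O(\delta)$ for each $i$, so $\xi\notin F$. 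Therefore $\partial G-F=(\hat U\setminus F)\sqcup(\hat V\setminus F)$, each side closed (hence clopen) in $\partial G-F$ and nonempty, so $F$ topologically separates $\partial G$. The main obstacle is the intersection bound $\hat U\cap\hat V\subset F$, which propagates the finite-scale sphere separation to separation at infinity and relies crucially on choosing $\delta$ large (as in Fact~\ref{fact: thickening of spheres are connected}) so that short paths between sphere points remain in the thickened sphere.
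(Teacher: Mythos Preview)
Your overall strategy---extract a subsequential limit $F$ of the bounded cut-sets and show it separates $\partial G$---is the same as the paper's, but the mechanism you use to propagate separation to infinity has a genuine gap.

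The step ``$(\xi'_n\mid\xi''_n)_o\to\infty$, so by hyperbolicity $d(v'_n,v''_n)\le 2\delta$ for $n$ large'' is false as stated. The Gromov product $(\xi'_n\mid\xi''_n)_o$ tending to infinity only tells you that the rays $[o,\xi'_n)$ and $[o,\xi''_n)$ fellow-travel up to some distance $T_n\to\infty$; it says nothing about their distance at the specific radius $R_n$. Hausdorff convergence of $\hat U_n$ in the visual metric gives you $\xi'_n\in\hat U_n$ with $(\xi'_n\mid\xi)_o\to\infty$, but with no control on the rate relative to $R_n$. If, say, $(\xi'_n\mid\xi)_o\asymp\sqrt{R_n}$, then $v'_n=[o,\xi'_n)\cap S_{R_n}$ and $v''_n=[o,\xi''_n)\cap S_{R_n}$ can be at distance of order $R_n$ from each other, and your short-path argument through $P_{R_n}$ collapses. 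More conceptually: the labels $U_n,V_n$ are assigned independently at each scale $R_n$, so there is no reason a point $\xi$ whose projection lies in $U_n$ for all $n$ cannot also lie in the Hausdorff limit $\hat V$---the approximants $\xi''_n\in\hat V_n$ may live far from $\pi_{R_n}(\xi)$ on $S_{R_n}$ while still converging to $\xi$ in $\partial G$.

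The paper handles exactly this issue by first proving that projecting a separating set $P_{R'}\subset S_{R'}$ down to $S_R$ still separates $(S_R)_\delta$ (with a controlled constant), and then using a diagonal extraction to arrange $P_{R_1}=\pi_{R_1}(P_{R_2})$ for all $R_1\le R_2$. This coherence forces the components at different scales to be nested, so one obtains fixed boundary points $\xi,\eta$ whose projections lie in different components of $(S_R)_\delta-(P_R)_\delta$ \emph{for every} $R$; a path in $\partial G-P$ joining them would then project to a path in $(S_R)_\delta-(P_R)_\delta$, a contradiction. Your Hausdorff-limit approach could likely be repaired, but not without some device---essentially this projection/coherence step---that ties the separation at scale $R_n$ to separation at all smaller scales.
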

\begin{proof}
We start with the following claim.
\begin{claim}\label{claim: projecting separating sets P_R}
There exists $K>0$ such that for any $R<R'$, if $(P_{R'})_\delta$ separates $(S_{R'})_\delta$ into connected components of size at most $\frac 1 K |(S_{R'})_\delta|$, then $\left(\pi_R(P_{R'})\right)_{2\delta}$ separates $(S_R)_\delta$ into connected components of size at most $\frac 1 2 |(S_R)_\delta|$.
\end{claim}

\begin{proof}
If $x, y \in S_R - \left(\pi_R(P_{R'})\right)_{2\delta} $, then $\left(\pi_{R'}\ii(x_{2\delta})\right)_\delta$ and $\left(\pi_{R'}\ii(y_{2\delta})\right)_\delta$ are connected in $(S_{R'})_\delta - (P_{R'})_\delta$.
If moreover $d(x,y) \le \delta$, these two sets intersect.
This implies that for every connected subset $D$ of $(S_R)_\delta -  \left(\pi_R(P_{R'})\right)_{2\delta}$, the set $D':=(\pi_R\ii(D\cap S_R ))_\delta$ is connected in $(S_{R'})_\delta - (P_{R'})_\delta$.
The conclusion of claim follows from the fact that sizes of $D$ and $D'$ differ by some constant factor, as in Fact \ref{fact: size of sectors}. 
\end{proof}

From Lemma \ref{lem: log sep to bdd sphere sep}, for any large enough $R$ there exists $P_R \subset S_R$ of bounded size $M$ satisfying that $(P_R)_{2\delta}$ separates $(S_R)_\delta$ into connected components of size at most $\frac 1 2 |(S_R)_\delta|$.

\begin{claim}
Without any loss of generality, we can assume that we have $P_{R_1} = \pi_{R_1}(P_{R_2})$ for every $R_1 \le R_2$.
\end{claim}

\begin{proof}
We can assume without any loss of generality that the projection maps are chosen so that we have $\pi_{R_1}(x) = \pi_{R_1} \circ \pi_{R_2}(x)$ for every $R_1 < R_2$ and $x \in G - B_{R_2}$.

For every $R'>0$, Claim \ref{claim: projecting separating sets P_R} allows us to obtain $P_R$ for every $R < R'$ by projecting some subset of $S_{R'}$.
Then, if we have $R_1 < R_2 < R'$, with $P_{R_1}$ and $P_{R_2}$ obtained by projecting some subset of $S_{R'}$, we have $P_{R_1} = \pi_{R_1}(P_{R_2})$ since $\pi_{R_1} = \pi_{R_1} \circ \pi_{R_2}$.
Since the spheres of $G$ are finite, we can proceed to an extraction to obtain a sequence $R'_n$ such that for every $R>0$ the sequence $(\pi_{R}(P_{R'_n}))_{n \ge 0}$ is constant (it is only defined when $R'_n \ge R$). Without any loss of generality we can assume that we have $P_R = \pi_{R}(P_{R'_n})$.
We finally get the desired property that $P_{R_1} = \pi_{R_1}(P_{R_2})$ for every $R_1 < R_2$.

\end{proof}

Now the sequence $P_R$ has a limit $P \subseteq \partial G$ as $R\to\infty$.
To complete the proof of Lemma \ref{lem: bdd sphere sep to sep of boundary} it remains to show that $P$ topologically separates $\partial G$.


From above, there exist $\xi,\eta\in\partial G$ such that $\pi_R(\xi)$ and $\pi_R(\eta)$ are in different components of $(S_R)_\delta-(P_R)_\delta$ for all large enough $R$.
Assume for contradiction that $\xi$ and $\eta$ are in the same component of $\partial G- P$.
The boundary $\partial G$ is path connected, let $\gamma$ be a path in $\partial G - P$ connecting $\xi$ and $\eta$. 
There exists $\epsilon>0$ such that the path $\gamma$ avoids the $\epsilon$-neighbourhood of $P$ in $\partial G$.
Let $R$ be big enough so that $\pi_R\ii(x_{2 \delta}) \cap \partial G$ is of diameter $\le \epsilon/2$ for each $x\in S_R$.

Thus, $\pi_R\circ \gamma$ is a $2\delta$-step path in $S_R$ which avoids $(P_R)_{2\delta}$.
Completing it with a collections of geodesic arcs of length at most $\delta$, we get a path in $(S_R)_\delta$ which avoids $(P_R)_\delta$, and connects $\pi_R(\xi)$ and $\pi_R(\eta)$.
	A contradiction.
This ends the proof of Lemma \ref{lem: bdd sphere sep to sep of boundary}.
\end{proof}

We are now able to prove Theorem \ref{thm: main thm}.
\begin{proof}[Proof of Theorem \ref{thm: main thm}]
By Lemmas \ref{lem: log sep to bdd sphere sep} and  \ref{lem: bdd sphere sep to sep of boundary} we see that $\partial G$ is topologically separated by a finite set of points. It therefore has a local cut point. It follows from Bowditch \cite{bowditch1998cut} that $G$ splits over a virtually cyclic group or $G$ is a Fuchsian group.
\end{proof}
\section{Proof of Theorem \ref{theorem: gluing tori along filling curves}}\label{section: proof of theorem B}
In this section,  we construct a hyperbolic group with superlogarithmic separation profile whose boundary has conformal dimension one.
Let us start by giving the following proposition.

\begin{proposition}[{{\cite[Lemma 1.3]{benjamini2010separation}}}]
\label{prop: sep monotone under coarse embeddings}
Let $G$ and $H$ be bounded degree infinite graphs such that there exists a coarse embedding $G \to H$.
Then, $\sep_G \preceq \sep_H$.
\end{proposition}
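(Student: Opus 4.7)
The plan is to transport finite subgraphs of $G$ to $H$ through the coarse embedding, find a small separator there using $\sep_H$, and pull that separator back to $G$. Let $f\colon G\to H$ denote the coarse embedding and let $\rho_\pm$ be its control functions. Since $\rho_-(r)\to\infty$ and $G$ has bounded degree, the fibers of $f$ are uniformly bounded by some constant $K$, and every edge $uv\in E\Gamma$ satisfies $d_H(f(u),f(v))\le L:=\rho_+(1)$.

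Given a finite subgraph $\Gamma\subset G$ with $|V\Gamma|\le n$, I would form an auxiliary subgraph $\Gamma'\subset H$ whose vertex set is $f(V\Gamma)$ together with the vertices on a chosen geodesic $\gamma_{uv}$ in $H$ from $f(u)$ to $f(v)$ for each edge $uv\in E\Gamma$, with the edges of $\Gamma'$ induced from $H$. Because $\Gamma$ has at most $(\deg G)\cdot n / 2$ edges and each geodesic carries at most $L+1$ vertices, one obtains $|V\Gamma'|\le Cn$ for a constant $C=C(L,\deg G)$.

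Next, I would apply the separation profile of $H$ to $\Gamma'$ but with the threshold $\tfrac{1}{2}$ in the definition of $\cut$ replaced by $\beta:=1/(2KC)$; as noted just after the definition, this only alters $\sep_H$ by a multiplicative constant. This produces a cut $C'\subset V\Gamma'$ with $|C'|\lesssim \sep_H(Cn)$ whose removal leaves components of $\Gamma'$ of size at most $\beta|V\Gamma'|$. I would then pull back by setting
\[ \tilde C \;=\; \bigl(f^{-1}(C')\cap V\Gamma\bigr)\cup\bigl\{u\in V\Gamma : \exists\, uv\in E\Gamma \text{ with } \gamma_{uv}\cap C'\ne\emptyset\bigr\}. \]
Bounded fibers of $f$, bounded length of each $\gamma_{uv}$, and bounded degree of $H$ together ensure that each vertex of $C'$ is ``responsible'' for only boundedly many elements of $\tilde C$, so $|\tilde C|\lesssim\sep_H(Cn)$, which yields the required bound of the form $\sep_G(n)\le D\sep_H(Dn)+D$.

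The only nontrivial step is verifying that $\tilde C$ actually separates $\Gamma$ into components of size at most $n/2$. For a connected component $D$ of $\Gamma-\tilde C$, no internal edge $uv$ of $D$ satisfies $\gamma_{uv}\cap C'\ne\emptyset$ and no vertex $v\in D$ satisfies $f(v)\in C'$, so $f(D)$ together with all the corresponding $\gamma_{uv}$ lies in a single connected component of $\Gamma'-C'$, which has at most $\beta|V\Gamma'|$ vertices. Since $f$ has fibers of size at most $K$, this gives $|D|\le K\beta|V\Gamma'|\le KC\beta n = n/2$. The main obstacle is arranging the constants so that this final inequality closes: the choice of $\beta$ is forced by $K$ and $C$, and the crucial ingredient is the invariance of $\sep$ under changing the threshold, which is what converts the ``diluted'' cut in $H$ back into a genuine one-half cut in $G$.
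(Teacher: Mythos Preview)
The paper does not give its own proof of this proposition; it is simply cited from \cite[Lemma~1.3]{benjamini2010separation}. Your argument is a correct self-contained proof of the cited result, and it follows the natural strategy: push $\Gamma$ forward along $f$, thicken by short geodesics to restore connectivity, cut there, and pull the cut back using bounded fibres and bounded degree.

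One cosmetic point: in the last chain of inequalities you bound the component size by $n/2$, but the definition of $\cut\Gamma$ requires components of size at most $|V\Gamma|/2$. This is not a genuine gap, since your bound $|V\Gamma'|\le Cn$ actually comes from $|V\Gamma'|\le C|V\Gamma|$, so the same computation gives $|D|\le K\beta C|V\Gamma|=|V\Gamma|/2$; equivalently, you may simply take $n=|V\Gamma|$ without loss of generality.
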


Recall that quasi-isometric embeddings are examples of coarse embeddings.
This proposition implies that the separation profile is invariant under coarse equivalences and quasi-isometries.
In particular we can consider separation profiles more generally for metric spaces  that are coarsely equivalent to graphs of bounded degree.
This is what we will do in this section for the hyperbolic plane.\\

Let $\Sigma$ and $\Sigma'$ be two closed hyperbolic orientable surfaces,  and $\gamma \subset \Sigma$, $\gamma' \subset \Sigma'$ be two simple closed filling geodesic curves.
Recall that a curve on a surface is said to be \textit{filling} when its complementary is homeomorphic to a union of disks.
Let~$S = (\Sigma \sqcup \Sigma') / {\gamma \simeq \gamma'}$ be the space obtained by gluing~$\Sigma$ and~$\Sigma'$ along~$\gamma$ and~$\gamma'$.

The universal cover $\tilde S$ of $S$ consists of copies of hyperbolic planes, that we will call sheets, glued together along the geodesic lines which correspond to the lifts of~$\gamma$ and~$\gamma'$.

Let $F$ be one of the sheets covering $\Sigma$. For a lift $\tild \gamma$ of $\gamma$ let $F_{\tild \gamma}$ be the adjacent sheet covering $\Sigma'$ which is glued to $F$ along $\tild{\gamma}$.

Let~$R>0$.
Let $B_R$ (resp. $B_{R/3}$) be the balls of radius $R$ (resp. $R/3$) in $F$ centered around $o$.
Let us consider 
$$X = B_R \cup \bigcup_{\tild\gamma \cap B_{R/3} \ne \emptyset} B_{F_{\tild\gamma}}(o_{\tild\gamma},R/3) \subset \tilde S$$
where the union ranges over all lifts $\tild\gamma$ of $\gamma$ that intersect $B_{R/3}$ and $o_{\tild\gamma}$ is the closest point to $o$ on $\tild\gamma$. 
See Figure \ref{figure: the set X}.

\begin{proposition}\label{prop: cut bigger than R}
The set $X$ satisfies $\cut X \succ R.$
\end{proposition}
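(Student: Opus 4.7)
My plan is to prove the stronger bound $\cut X\gtrsim R^2$, which in particular is $\succ R$; the idea is that a separator of $X$ must not only cut $B_R$ in the usual sense but also break the topological shortcut provided by every attached small ball whose gluing arc meets both sides of the cut.

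First, I record the size estimate $|X|\asymp|B_R|\asymp e^R$: by orbit counting in $\Sigma$, the number of lifts $\tild\gamma$ with $o_{\tild\gamma}\in B_{R/3}$ is $\asymp e^{R/3}$, each attached ball has $\asymp e^{R/3}$ vertices, and so the total contribution of the small balls is $O(e^{2R/3})=o(e^R)$. Consequently, for any separator $C$ of $X$ the restriction $C\cap B_R$ separates $B_R$ into pieces of size $\le(1/2+o(1))|B_R|$, and $\sep_{\bbH^2}\asymp\log$ forces $|C\cap B_R|\gtrsim R$. Write $U_B,V_B$ for the two principal components of $B_R\setminus C$; I shall think of $\partial U_B:=C\cap B_R$ as a cut curve of length $\gtrsim R$, which for balanced components must traverse $B_{R/3}$, with length $\gtrsim R$ inside.

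A short triangle-inequality check shows that, for every lift $\tild\gamma$ with $o_{\tild\gamma}\in B_{R/3}$, the gluing arc $\alpha_{\tild\gamma}:=\tild\gamma\cap B_{F_{\tild\gamma}}(o_{\tild\gamma},R/3)$ contains $\tild\gamma\cap B_{R/3}$ and lies in $B_{2R/3}\subset B_R$. If $\alpha_{\tild\gamma}$ meets both $U_B$ and $V_B$, then each of the two half-discs of $B_{F_{\tild\gamma}}(o_{\tild\gamma},R/3)$ on either side of $\tild\gamma$ provides a path from $U_B$ to $V_B$ lying entirely inside the small ball, and both must be broken by $C$; applying $\sep_{\bbH^2}\asymp\log$ to each half-disc of radius $R/3$ shows that this blocking costs $\gtrsim R/3$ vertices of $C$, and these costs are disjoint across distinct $\tild\gamma$. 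The filling hypothesis now supplies the count: since $\gamma$ fills $\Sigma$, the lifts of $\gamma$ partition $\bbH^2$ into tiles of uniformly bounded diameter, whence a rectifiable curve of length $L$ in $B_{R/3}$ transversally meets $\asymp L$ distinct lifts; applied to $\partial U_B\cap B_{R/3}$ this yields $\gtrsim R$ distinct crossed $\tild\gamma$. Summing,
\[|C|\ge|C\cap B_R|+\sum_{\text{crossed }\tild\gamma}|C\cap B_{F_{\tild\gamma}}|\gtrsim R+R\cdot\tfrac{R}{3}\asymp R^2.\]

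The main technical point I expect to be delicate is this last count: formalising the intuition that $\partial U_B$ is a rectifiable cut curve and extracting $\gtrsim R$ transverse intersections with the filling family $\{\tild\gamma\}$ via a coarea or equidistribution argument. The filling hypothesis is essential — without it the cut $\partial U_B$ could in principle hug a single lift of $\gamma$ and avoid transverse crossings altogether, and the small balls would be free to collect on one side, collapsing the lower bound back to $R$.
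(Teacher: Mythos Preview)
The central gap is your claim that blocking the shortcut through each attached half-disc costs $\gtrsim R/3$ vertices ``by $\sep_{\bbH^2}\asymp\log$''. This misapplies the separation profile: $\sep_{\bbH^2}(n)\asymp\log n$ controls the cost of a \emph{balanced} cut of a set of size $n$, not the cost of disconnecting two prescribed boundary subsets. What you actually need here is a min-cut (Menger) bound, and that is governed by the distance along the gluing diameter $\alpha_{\tild\gamma}$ between the nearest points of $U_B$ and of $V_B$. If your cut curve crosses $\tild\gamma$ transversally at a single point, then $U_B\cap\alpha_{\tild\gamma}$ and $V_B\cap\alpha_{\tild\gamma}$ can be adjacent, and $O(1)$ vertices in the half-disc suffice to separate them. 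Your sum then collapses to $R\cdot O(1)$, giving no improvement over the trivial bound; in particular the claimed $R^2$ does not follow.

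The paper's proof confronts exactly this difficulty. It straightens $\partial C$ to a family $\hat\Lambda$ of geodesic chords, observes that only $O(1)$ of them can enter $B_{R/3}$ (each such chord has length $\ge 2R/3$ while the total length is $O(R)$), and then uses a pigeonhole argument on concentric annuli inside $B_{R/3}$ to find a layer $A$ in which the surviving arcs (``walls'') are pairwise at distance $\ge\sqrt{R}$. This is what guarantees that on either side of a separating wall $\alpha$ there is a region of width $\ge\sqrt{R}$ lying in a single component; hence each lift $\tild\gamma_i$ crossing $\alpha$ carries $\Theta(\sqrt{R})$ disjoint semicircular paths in $F_{\tild\gamma_i}$ joining the two components, and the filling hypothesis supplies $\asymp R$ such lifts along $\alpha$, for $\asymp R^{3/2}$ disjoint paths in total. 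Your outline has the right architecture (filling $\Rightarrow$ many crossed lifts; attached balls $\Rightarrow$ extra disjoint paths), but it is missing the straightening-plus-pigeonhole step that forces the two sides to be \emph{far apart} along each crossed lift --- and it is that step, rather than the crossing count you flagged as delicate, that actually carries the argument.
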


\begin{figure}\caption{the set $X$ of Proposition \ref{prop: cut bigger than R}}\label{figure: the set X}
\begin{center}
\includegraphics[scale=0.5]{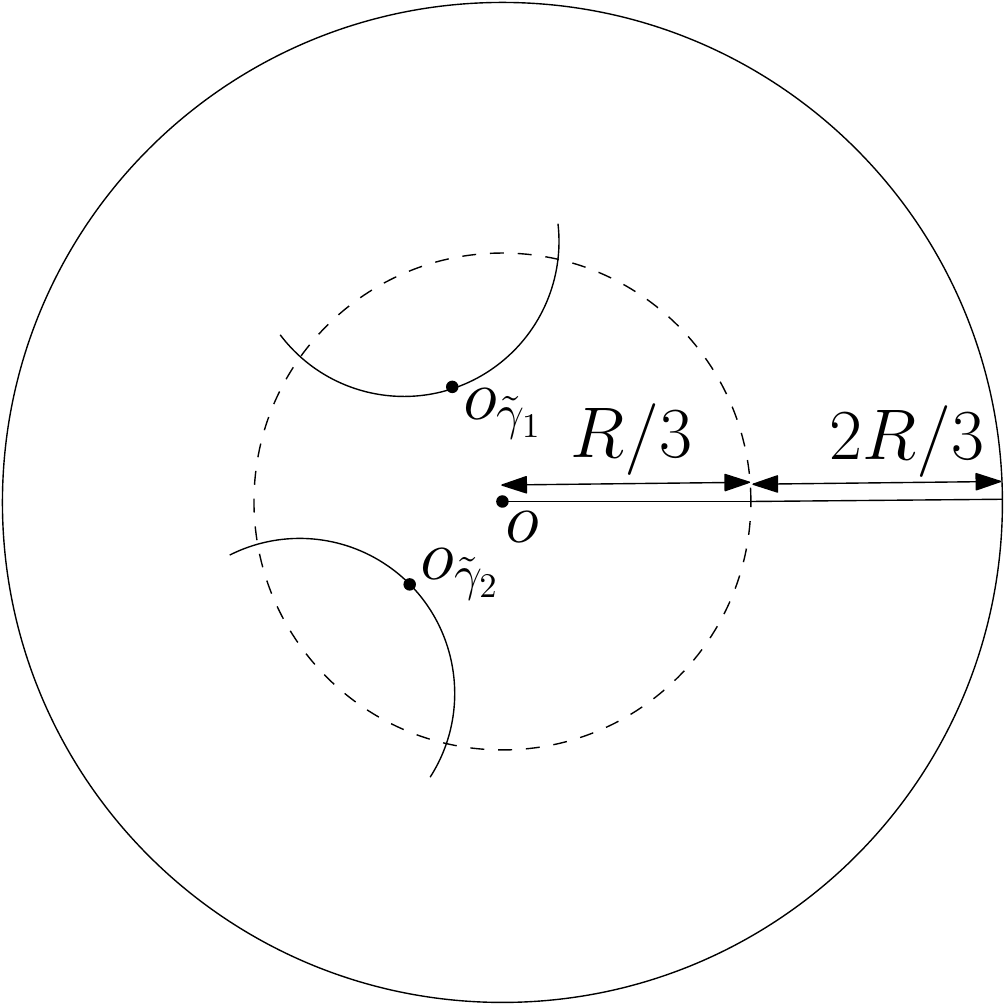}
\end{center}
\end{figure}

Let us prove how Theorem~\ref{theorem: gluing tori along filling curves} can be deduced from this proposition.

\begin{proof}[Proof of Theorem~\ref{theorem: gluing tori along filling curves}]

	The fundamental group~$\pi_1 S$ is quasi-isometric to the universal cover~$\tilde S$.
	Thus, we can compute the separation profile of~$\tilde S$ instead of that of~$\pi_1 S$, and the theorem follows from Proposition \ref{prop: cut bigger than R}.
\end{proof}

We now prove the proposition.

\begin{proof}[Proof of Proposition \ref{prop: cut bigger than R}]

\begin{claim}
Most of the volume of $X$ lies in the ball $B_R \subset F$: 
$$\vol (X) \asymp \vol(B_R).$$
\end{claim}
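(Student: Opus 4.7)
The plan is to establish $\vol(X)\asymp \vol(B_R)$ by bounding above and below separately. The lower bound $\vol(X)\ge\vol(B_R)$ is immediate from $B_R\subset X$, so the content lies in a matching upper bound on the total volume contributed by the extra disks $B_{F_{\tild\gamma}}(o_{\tild\gamma},R/3)$ indexed by the lifts $\tild\gamma$ of $\gamma$ that meet $B_{R/3}$.

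Let $N$ denote the number of such lifts. Each extra disk is a hyperbolic disk of radius $R/3$, hence has volume $\asymp e^{R/3}$, so the total extra contribution is $\asymp N e^{R/3}$. The main step is therefore to show $N=O(e^{R/3})$. I would prove this by a fundamental-domain volume count applied to the one-dimensional Hausdorff measure $H^1$ of the preimage $p^{-1}(\gamma)\cap B_{R/3+1}$, where $p\colon \tilde\Sigma\to\Sigma$ is the universal covering. Fix a compact fundamental domain $\mathcal F$ for the $\pi_1(\Sigma)$-action; each translate contributes $\length(\gamma)$ to $H^1(p^{-1}(\gamma))$, and $B_{R/3+1}$ is covered by $O(e^{R/3})$ translates of $\mathcal F$ (by the exponential volume growth of the hyperbolic plane and the bounded diameter of $\mathcal F$), so $H^1(p^{-1}(\gamma)\cap B_{R/3+1})=O(e^{R/3})$.

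For the matching lower bound on this length in terms of $N$, each lift $\tild\gamma$ contributing to $N$ picks up a guaranteed unit of length inside $B_{R/3+1}$: pick any point $p_{\tild\gamma}\in\tild\gamma\cap B_{R/3}$; then $\tild\gamma\cap B(p_{\tild\gamma},1/2)$ is a geodesic chord through the centre of the ball, hence has length $1$, and the ball $B(p_{\tild\gamma},1/2)$ lies inside $B_{R/3+1}$. Since distinct lifts of $\gamma$ are distinct geodesic lines in $\tilde\Sigma$, these unit segments are pairwise disjoint in $p^{-1}(\gamma)$, giving $N\le H^1(p^{-1}(\gamma)\cap B_{R/3+1})=O(e^{R/3})$.

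Combining, the total extra volume is $O(e^{2R/3})$, which is negligible compared to $\vol(B_R)\asymp e^R$, yielding $\vol(X)\asymp \vol(B_R)$. The only mild technicality I anticipate is the covering-by-translates-of-$\mathcal F$ step, which is an immediate consequence of $\vol(B_{R/3+1+\diam\mathcal F})\asymp e^{R/3}$ and adds nothing conceptual; the geometric heart of the argument is the observation that in the hyperbolic plane the length of the preimage of a closed curve and the area of an ambient ball scale identically, so the gain from counting lifts is only $e^{R/3}$, well below the $e^R$ needed to catch up with $\vol(B_R)$.
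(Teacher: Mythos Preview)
Your proposal is correct and follows essentially the same approach as the paper: both obtain the lower bound from $B_R\subset X$ and the upper bound from the estimate $\vol(X)\preceq \vol(B_R)+N\cdot e^{R/3}$ with $N\preceq e^{R/3}$ the number of lifts of $\gamma$ meeting $B_{R/3}$, yielding an extra term of order $e^{2R/3}$ which is negligible against $e^R$. The only difference is that the paper simply asserts the bound $N\preceq e^{R/3}$, whereas you supply a detailed justification via the $H^1$-measure of $p^{-1}(\gamma)$ in a slightly larger ball and a fundamental-domain count; this is a standard and perfectly valid way to fill in that step.
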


\begin{proof}
The volume of a ball $B(o,r)$ of radius $r$ in the hyperbolic plane is $$\vol(B(o,r)) = 2\pi (\cosh(r)-1)\asymp e^r.$$
The number of lifts of a geodesic that intersect a ball $B(o,r)$ is $\preceq e^r$.
Thus, $$\vol(B(o,R)) \preceq \vol (X) \preceq \vol(B(o,R)) + e^{R/3}\vol(B(o,R/3)) \preceq \vol(B(o,R)).\qedhere$$
\end{proof}

Let $C$ be a (1-thick) cutset of $X$, that is every connected component of $X - C$ has volume at most $\alpha\vol X $ for some $\alpha < 1$.
Up to taking a small enough $\alpha$, $C$ has to separate the ball~$B_R$.
We want to show that~$C$ must have volume strictly bigger than $\log \vol (X) \asymp \log \vol(B(o,R)) \asymp R$. 
Let us assume for a contradiction that we have (up to constants), $\vol C \preceq R$.

Let $\Lambda = \partial C$. 
The total length of $\Lambda$ is $O(R)$: indeed, $C$ has volume $R$ and can be chosen to be a union of balls of radius 1 in a given net and so the length of their boundary component has to be $O(R)$.

The components of $\Lambda$ are either proper arcs or simple closed curves in $B_R$.
Let $\hat\Lambda$ be the collection of geodesics with the same endpoints as the arcs of $\Lambda$.
Let $C'$ be the union of the components of $A - \hat\Lambda$ that include $C\cap \partial B_R$. See Figure \ref{figure: separating-sets-hyperbolic-ball}. 

\begin{figure}\caption{the separating set $C$ of the hyperbolic ball}\label{figure: separating-sets-hyperbolic-ball}
\begin{center}
\includegraphics[scale=0.7]{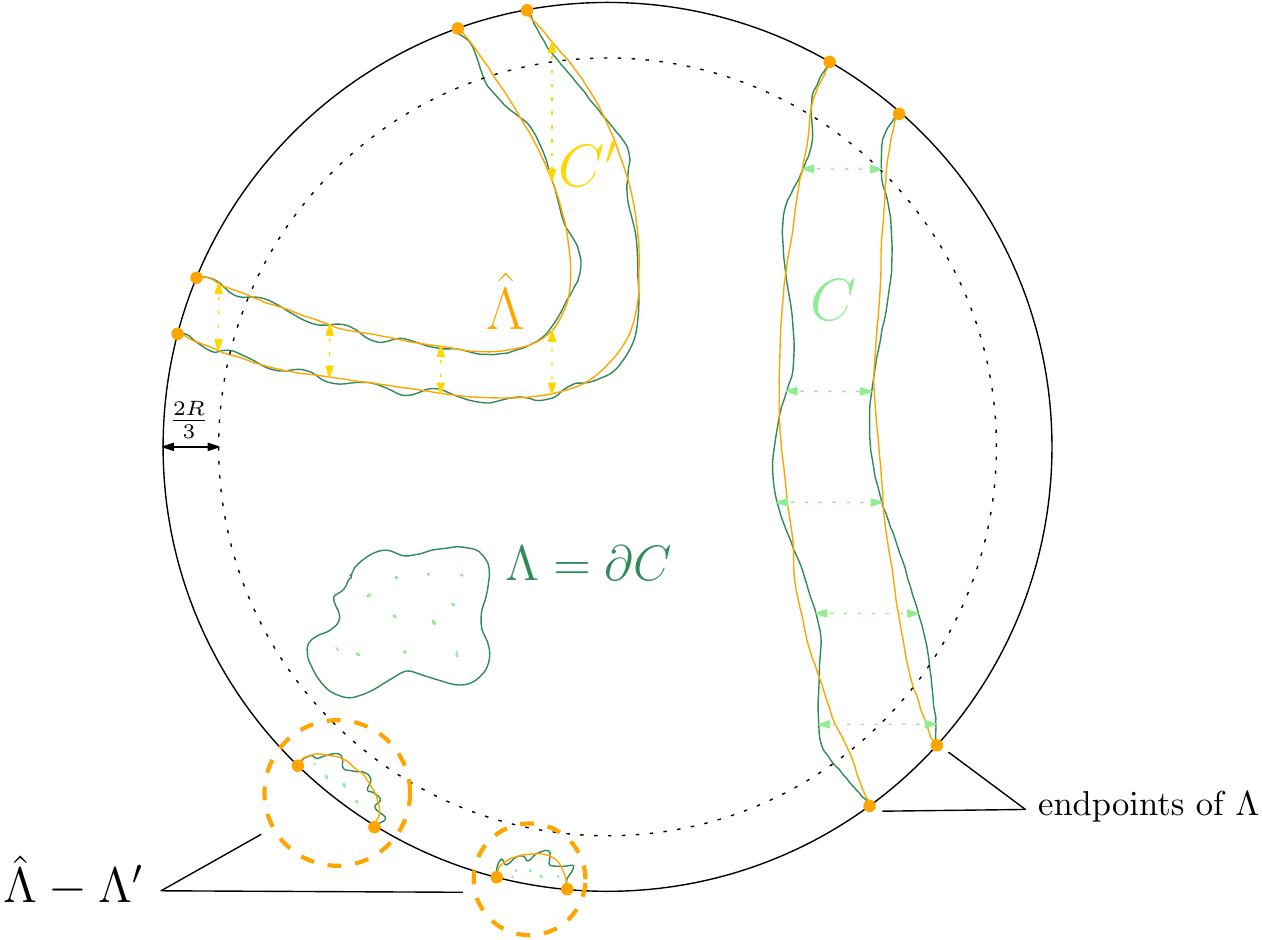}
\end{center}
\end{figure}

\begin{lemma}\label{lem: comparing C and C'}
\begin{enumerate}[label=(\roman*)]
    \item $\vol(C\triangle C')\preceq R$
    \item Every component $E$ of $B_R-C$ of size $\succ R$ corresponds to a unique component $E'$ of $B_R-C'$ such that $\vol(E\triangle E')\preceq R$, and vice versa.
\end{enumerate}
\end{lemma}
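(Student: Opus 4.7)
My plan is based on the linear isoperimetric inequality in the hyperbolic plane: any simply connected region with rectifiable boundary of length $\ell$ has area at most $c\ell$ for some absolute constant $c$. I apply this to the ``bigons'' trapped between arcs of $\Lambda$ and their geodesic replacements in $\hat\Lambda$. For each arc $\lambda$ of $\Lambda$, the closed curve $\lambda\cup\hat\lambda$ has length at most $2\,\mathrm{length}(\lambda)$ (since $\hat\lambda$ is a geodesic) and bounds a bigon $U_\lambda$ of area $\preceq\mathrm{length}(\lambda)$. Each closed component $\mu$ of $\Lambda$ bounds a disk $V_\mu$ of area $\preceq\mathrm{length}(\mu)$. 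Setting $B=\bigcup_\lambda U_\lambda\cup \bigcup_\mu V_\mu$ and summing using $\mathrm{length}(\Lambda)\preceq R$ yields $\vol(B)\preceq R$.

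Next, I would set up a combinatorial bijection $\Phi$ between the connected components of $B_R-\Lambda$ and those of $B_R-\hat\Lambda$. Since the arcs of $\Lambda$ are pairwise disjoint simple arcs with endpoints on $\partial B_R$, their endpoints are pairwise unlinked along $\partial B_R$, so the geodesics in $\hat\Lambda$ are also pairwise disjoint; moreover $\Lambda\cap\partial B_R=\hat\Lambda\cap\partial B_R$, so the decomposition of $\partial B_R$ into boundary arcs is common to both. The bijection $\Phi$ pairs two components precisely when they meet the same boundary arcs. The crucial geometric point is that for paired components $E$ and $\Phi(E)$, the symmetric difference $E\triangle\Phi(E)$ is contained in $B$: outside $B$, neither $\Lambda$ nor $\hat\Lambda$ is present and the partition is determined by the boundary arcs, so $E$ and $\Phi(E)$ agree on $B_R\setminus B$.

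Both assertions of the lemma follow quickly from this correspondence. For (i), the $\hat\Lambda$-components making up $C'$ are exactly the $\Phi$-images of the $\Lambda$-components making up $C$ (both being characterized by touching $C\cap\partial B_R=C'\cap\partial B_R$), so $C\triangle C'\subseteq B$ and $\vol(C\triangle C')\preceq R$. For (ii), given a component $E$ of $B_R-C$ with $|E|\succ R$, take $E':=\Phi(E)$, a component of $B_R-C'$; then $\vol(E\triangle E')\le\vol(B)\preceq R$. Uniqueness of the correspondence follows because $|E|-\vol(B)\succ R$ forces $E\cap E'$ to have positive measure, while for any other component $F\ne E$ of $B_R-C$, the intersection $F\cap E'$ lies inside $B$, hence has volume $\preceq R$.

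The main obstacle I anticipate is careful handling of the closed components of $\Lambda$, which have no endpoints on $\partial B_R$ and hence no counterpart in $\hat\Lambda$; they are simply absorbed into their enclosed disks $V_\mu\subseteq B$, contributing only $O(R)$ to every error term. The essential place where hyperbolic (rather than Euclidean) geometry is used is the linearity of the isoperimetric inequality: it gives a bigon-area bound of $O(\mathrm{length})$ rather than $O(\mathrm{length}^2)$, without which the total error would swamp the volume of $B_R$.
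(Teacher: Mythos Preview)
Your proposal is correct and follows essentially the same approach as the paper: both rely on the linear isoperimetric inequality in $\bbH^2$ applied to regions whose boundary lies in $\Lambda\cup\hat\Lambda$, and both match large components of $B_R-C$ with components of $B_R-C'$ via their trace on $\partial B_R$. Your version is more explicit (building the bigons $U_\lambda$, the disks $V_\mu$, and the bijection $\Phi$), whereas the paper simply records that $\partial(C\triangle C')$ and $\partial(E\triangle E')$ are contained in $\Lambda\cup\hat\Lambda$ and invokes the isoperimetric inequality directly.
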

\begin{proof}
(i) The difference between the sets $C$ and $C'$ has boundary in $\Lambda \cup \hat \Lambda$.
Then, since the total length of $\Lambda$ and $\hat\Lambda$ is $O(R)$, it follows from the isoperimetric inequality on the hyperbolic plane, that $\vol(C \triangle C') \preceq \length(\Lambda\cup \hat \Lambda) \preceq R$.

(ii) By the isoperimetric inequality, a component $E$ of $B_R-C$ of size $\succ R$ must intersect $\partial B_R$. There is a component $E'$ of $B_R-C'$ with $E\cap \partial B_R = E'\cap \partial B_R$. The difference $E\triangle E'$ comprises of sets which are bounded by $\Lambda$ and $\hat\Lambda$. The volume of this difference can be bounded by $\preceq R$ again by the isoperimetric inequality.
\end{proof}

Let $\Lambda'$ be set of geodesics in $\hat\Lambda$ that meet $B_{R/3} = B_F(o,R/3)$.
Any geodesic in $\Lambda'$ must have a segment joining $\partial B_{R/3}$ and $\partial B_R$, and thus must have length at least $2R/3$. 
Since $\length(\hat \Lambda)\preceq R$ there are $O(1)$ many geodesics in $\Lambda'$.
Let $k$ be the number of geodesics in $\Lambda'$.

Let $m(x,y,z)$ denote the center of the geodesic triangle spanned by a triple points $x,y,z\in\bbH^2$. Let $$M= \{m(o,x,y) \mid x,y\in \Lambda'\cap \partial B_R\}.$$
There are at most $(2k)^2$ points in $M$.

Divide $B_{R/3}$ into $3((2k)^2+1)$ radial annuli of same width, called \emph{layers}.
By the Pigeonhole Principle, there exist three consecutive layers $A_-,A,A_+$ that do not contain a point of $M$.

\begin{claim}\label{claim: properties of walls}
For $R$ large enough, we have:
\begin{enumerate}[label=(\roman*)]
    \item The intersection $\Lambda'\cap A$ consists of geodesics joining the inner and outer boundaries of $A$.
    \item If $\alpha$ is a component of $\Lambda' \cap A$, and $\alpha$ is a subarc of $\lambda\in \Lambda'$, then $\alpha$ is at distance at most $\delta$ from one of the two geodesics connecting $o$ and $\partial \lambda$.
    \item If $\alpha_1,\alpha_2$ are component of $\Lambda' \cap A$, then either the Hausdorff distance $d_H(\alpha_1,\alpha_2)\le3\delta$ or they are at distance $\sqrt{R}$ apart.\footnote{The function $\sqrt{R}$ can be replaced by any function $o(R)$}
\end{enumerate}
\end{claim}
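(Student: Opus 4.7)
The plan rests on two standard features of the hyperbolic plane: (a) geodesic triangles are $\delta$-thin, and the projection of any vertex onto the opposite side lies within $\delta$ of the approximate center; and (b) two geodesic rays from $o$ making angle $\theta$ remain $\delta$-close up to radius $\asymp\log(1/\theta)$ and diverge exponentially thereafter. The key quantitative input, from the argument immediately preceding the claim, is that $k=O(1)$; hence the layer width $w=R/(9((2k)^2+1))$ satisfies $w\asymp R$, and in particular $w\gg\delta$ and $e^w\gg\sqrt{R}$ for $R$ large.

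For (i), fix $\lambda\in\Lambda'$ with endpoints $x,y\in\partial B_R$ and let $p_\lambda$ be the apex (closest point of $\lambda$ to $o$). By (a), $p_\lambda$ is within $\delta$ of $m(o,x,y)\in M$. Since $w>\delta$, the $\delta$-neighbourhood of $A$ is contained in $A_-\cup A\cup A_+$, a set which misses $M$ by hypothesis; hence $p_\lambda\notin A$. Because $\bbH^2$-geodesics have monotone radial distance on each side of their apex, either $p_\lambda$ lies farther from $o$ than the outer boundary of $A$ and $\lambda\cap A=\emptyset$, or $p_\lambda$ lies strictly inside the inner boundary of $A$ and each component of $\lambda\cap A$ crosses from the inner to the outer boundary.

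For (ii), let $\alpha\subset\lambda=[x,y]$ be such a component. By (i), $\alpha$ lies entirely on one side of $p_\lambda$, say between $p_\lambda$ and $x$. Applying the thin-triangle description to $(o,x,y)$, whose center is within $\delta$ of $p_\lambda$, the sub-arc of $\lambda$ from the center to $x$ fellow-travels the corresponding sub-arc of $[o,x]$ at distance at most $\delta$ (up to a harmless additive constant absorbed into the choice of $\delta$). Since $\alpha$ sits inside this sub-arc, the conclusion follows.

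For (iii), let $\alpha_i\subset\lambda_i$ be two components, and let $z_i\in\partial\lambda_i$ be such that $\alpha_i$ lies on the segment from $p_{\lambda_i}$ to $z_i$; by (ii), $\alpha_i$ is contained in the $\delta$-neighbourhood of $[o,z_i]$. If $z_1=z_2$ then $d_H(\alpha_1,\alpha_2)\le 2\delta$ and we are done. Otherwise set $m_{12}=m(o,z_1,z_2)\in M$. Because $m_{12}\notin A_-\cup A\cup A_+$, its distance to $o$ either exceeds the outer radius of $A$ by at least $w$, or falls short of the inner radius of $A$ by at least $w$. In the first case (b) shows that $[o,z_1]$ and $[o,z_2]$ stay $\delta$-close throughout $A$, so $d_H(\alpha_1,\alpha_2)\le 3\delta$. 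In the second case, by (b) the two rays are separated by at least a constant multiple of $e^w$ at each radius in $A$, and distances only grow from there; after subtracting the $O(\delta)$ slack from (ii), we obtain $d(\alpha_1,\alpha_2)\gtrsim e^w\gg\sqrt{R}$. The only place that needs care is the book-keeping of $\delta$-slacks produced by the thin-triangle approximations and by the identifications $p_\lambda\approx m(o,x,y)$ and $\alpha_i\approx[o,z_i]$; since the dichotomy in (iii) is between $O(\delta)$ and $e^{\Omega(R)}$, the precise multiplicative constants play no sensitive role, and taking $R$ large enough at the end absorbs them.
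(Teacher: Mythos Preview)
Your argument is correct and follows essentially the same route as the paper: in each part you exploit that the relevant triangle center $m(o,\cdot,\cdot)$ lies in $M$ and hence outside the buffered layer $A_-\cup A\cup A_+$, and then invoke thin triangles. In (iii) your dichotomy on whether $m_{12}$ lies above $A_+$ or below $A_-$ is in fact a little cleaner than the paper's three-case analysis on the endpoint distances $d(p_1,p_2)$, $d(q_1,q_2)$, though both arguments ultimately use the same fact.

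One quantitative slip: in $\bbH^2$, two geodesic rays issuing from $o$ separate \emph{linearly}, not exponentially, once past their fellow-travelling threshold (the distance at common radius $r$ is $\approx 2(r-r_c)$ where $r_c$ is the center radius; what grows exponentially is the divergence function, i.e.\ path length outside a ball, which is not what is used here). So in the second case of (iii) you obtain $d(\alpha_1,\alpha_2)\gtrsim w\asymp R$, not $e^w$. This is still far more than the required $\sqrt{R}$, so the proof stands; just correct the statement of (b) and the ensuing estimate.
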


\begin{proof}
(i) Otherwise, a component $\alpha$ of $\Lambda'\cap A$ is a geodesic arc connecting the outer boundary of $A$ to itself. Let $p$ be the point on $\alpha$ closest to $o$, let $\lambda$ be the geodesic of $\Lambda'$ to which $\alpha$ belongs, and let $x,y$ be the endpoints of $\lambda$ in $S_F(o,R)$. Then, $m(o,x,y)\in M$ is at distance $\delta$ from $p$, contradicting the assumption that $A_-\cup A \cup A_+$ does not include points of $M$.

(ii) Consider the geodesic triangle consisting of the geodesic $\lambda$ and the two geodesics connecting its endpoints to $o$. By assumption, the center of this geodesic is not in $A''$, hence by slimness of triangles in $\bbH^2$ the segment $\alpha$ is $\delta$-close to one of the sides.

(iii) Let $\alpha_1,\alpha_2$ be components of $\Lambda' \cap A$. Let $\lambda_1$ (resp. $\lambda_2$) be the geodesic in $\Lambda$ containing $\alpha_1$ (resp. $\alpha_2$). By (ii), $\alpha_1$ (resp. $\alpha_2$) is $\delta$-close to a radial geodesic $\lambda_1'$ (resp. $\lambda_2'$) connecting $o$ and one of the endpoints of $\lambda_1$ (resp. $\lambda_2$). Let $\alpha_1' = \lambda_1'\cap A$ (resp. $\alpha_2' = \lambda_2'\cap A$).
It suffices to prove that $d_H(\alpha_1',\alpha_2')\le \delta$, or they are at distance $\sqrt{R}+2\delta$ apart.

Let $p_1,q_1 \in \alpha_1'$ (resp. $p_2,q_2\in \alpha_2'$) be the intersection of $\alpha_1'$ (resp. $\alpha_2'$) with the inner and outer boundaries of $A$ respectively.
If $d(q_1,q_2)\le \delta$ then $d_H(\alpha_1',\alpha_2')\le \delta$ by the convexity of the metric.
Similarly, if $d(p_1,p_2)\ge \sqrt{R}+2\delta$ then $\alpha_1',\alpha_2'$ are at least $\sqrt{R}+2\delta$ apart.
Otherwise, $d(p_1,p_2) \le \sqrt{R}+2\delta$ and $d(q_1,q_2) > \delta$, then the center of the triangle with sides $\lambda_1',\lambda_2'$ is at distance at most $\sqrt{R}+3\delta$ from $\alpha_1'$. For $R$ large enough, such a point must be in $A_-\cup A \cup A_+$ in contradiction to the assumption.
\end{proof}

From the claim above it follows that $\Lambda'\cap A$ consists of at most $2k$ geodesic segments connecting the inner and outer boundaries of $A$ and the relation defined by $\alpha_1 \sim \alpha_2$ if $d_H(\alpha_1,\alpha_2)\le 3\delta$ is an equivalence relation.
Let $\calW$ be a set of representatives of the classes of this relation. We call the elements in $\calW$  \emph{walls}.
We call the connected components of $A - \calW$ \emph{regions}.
By the claim, the walls bounding each region are at distance $\sqrt{R}$ apart.

\begin{claim}\label{claim: difference between region and component}
Let $D$ be a region in $A$, then there exists a (unique) component $E$ of $X-C$ such that $\vol (D - E) \preceq R$.
\end{claim}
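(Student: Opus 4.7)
The plan is to transfer information from the walls (which are built from the straightened cutset $\hat\Lambda$) back to a component of $X-C$, passing through the components of $B_R - C'$ and using Lemma~\ref{lem: comparing C and C'}.

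The first step is to thin the region $D$ by removing the $3\delta$-neighbourhood of every wall in $\calW$, producing a subset $D_0 \subseteq D$. There are $O(1)$ walls, each of length $\asymp R$, so their $3\delta$-tubes have total area $\asymp R$, giving $\vol(D - D_0) \preceq R$. Since any two walls bounding $D$ are at distance $\geq \sqrt R \gg 3\delta$ by Claim~\ref{claim: properties of walls}(iii), the set $D_0$ stays connected. Using Claim~\ref{claim: properties of walls}(iii) again, every component of $\Lambda' \cap A$ is $3\delta$-Hausdorff close to some wall, and arcs of $\hat\Lambda \setminus \Lambda'$ avoid $B_{R/3} \supseteq A$; hence $D_0$ is disjoint from $\hat\Lambda$ inside $B_R$ and therefore lies in a single component of $B_R - \hat\Lambda$.

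That component of $B_R - \hat\Lambda$ is either contained in $C'$ or disjoint from $C'$. If it is contained in $C'$, then $D_0 \subseteq C'$, and by Lemma~\ref{lem: comparing C and C'}(i) together with the assumption $\vol C \preceq R$ one obtains $\vol D_0 \leq \vol C + \vol(C \triangle C') \preceq R$, so $\vol D \preceq R$ and the claim holds trivially with any component $E$ of $X - C$. In the main case $D_0$ lies in a single component $V'$ of $B_R - C'$ with $\vol V' \succ R$, and Lemma~\ref{lem: comparing C and C'}(ii) produces a unique component $E^{(0)}$ of $B_R - C$ with $\vol(V' \triangle E^{(0)}) \preceq R$. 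Taking $E$ to be the component of $X - C$ containing $E^{(0)}$ then gives
\[
\vol(D - E) \leq \vol(D - D_0) + \vol(V' \triangle E^{(0)}) \preceq R.
\]
Uniqueness is automatic: distinct components of $X-C$ are disjoint, so if $\vol(D - E_1) \preceq R$ and $\vol(D - E_2) \preceq R$ for $E_1 \neq E_2$ then $\vol D \leq \vol(D - E_1) + \vol(D - E_2) \preceq R$, reducing again to the trivial case.

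The delicate point is the first step: the removal of the $3\delta$-collars of the walls must cost only $O(R)$ in volume (which uses that there are only $O(1)$ walls, themselves of length $O(R)$) while simultaneously leaving $D_0$ connected and avoiding every geodesic of $\hat\Lambda$ encountered in $A$. Both of these depend on the sharp dichotomy of Claim~\ref{claim: properties of walls}(iii), which is what forces walls to sit in clean equivalence classes separated by $\sqrt R$. Once $D_0$ has been pinned down inside a single component of $B_R - \hat\Lambda$, the rest of the argument is a direct bookkeeping with Lemma~\ref{lem: comparing C and C'}.
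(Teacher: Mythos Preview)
Your argument is correct and follows essentially the same route as the paper. The paper isolates the ``section of $A$ between the innermost arcs $\alpha_1',\alpha_2'$'' of the two bounding equivalence classes, while you instead delete the $3\delta$-collars of all walls to obtain $D_0$; these two constructions play the same role (a connected core of $D$ disjoint from $\hat\Lambda\cap A$, differing from $D$ by volume $\preceq R$), and both proofs then feed this core into Lemma~\ref{lem: comparing C and C'} to land in a component of $X-C$. Your handling of the degenerate case $\vol D\preceq R$ and of uniqueness is more explicit than the paper's; the only place to tighten the writing is the sentence ``In the main case $D_0$ lies in a single component $V'$ of $B_R-C'$ with $\vol V'\succ R$'': you should say that this is simply because $\vol V'\ge \vol D_0 = \vol D - O(R)$, so either $\vol D\preceq R$ (trivial) or $\vol V'\succ R$ and Lemma~\ref{lem: comparing C and C'}(ii) applies.
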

\begin{proof}
Let $\alpha_1,\alpha_2$ be the walls bounding $D$. Let $\alpha_1',\alpha_2'$ be the inner most arcs in $D$ which belong to the equivalence classes of $\alpha_1,\alpha_2$ respectively. Let $E'$ be the connected component of $X-\hat\Lambda$ which includes the section of $A$ between $\alpha_1',\alpha_2'$. This section is contained in $D$, and $D - E'$ consists of two regions which are contained in the $3\delta$-neighborhood of $\alpha_1\cup \alpha_2$. Therefore, $\vol(D-E')\preceq R$.
The set $C'$ is bounded by geodesics in $\hat\Lambda$. It cannot contain the component $X-E'$, as otherwise $\vol(C')\succeq \vol(E') \succ R$.
Thus $E'$ is a component of $C'$.
By Lemma \ref{lem: comparing C and C'}, $E'$ corresponds to a unique component $E$ of $X-C$, and $\vol (D - E) \le \vol (D - E') +\vol (E' - E) \preceq R$.
\end{proof}

\begin{claim}
No component $E$ of $X-C$ contains more than $\frac{2}{3}$ of the layer $ A$
\end{claim}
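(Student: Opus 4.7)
The plan is to prove the claim by contradiction. Suppose some component $E$ of $X-C$ satisfies $\vol(E\cap A) > \tfrac{2}{3}\vol A$. I shall deduce a lower bound $\vol E > \alpha\vol X$, contradicting the cut property (provided $\alpha$ is chosen $<\tfrac{2}{3}$, a mild strengthening of the choice made at the beginning of the proof of Proposition~\ref{prop: cut bigger than R}). The key idea is a radial outward extension argument: a large angular portion of $A$ inside $E$ forces a comparably large angular portion of the outer annulus $B_R-B_{r_A^+}$ to lie in $E$, and this outer annulus carries essentially all of the volume of $X$.

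\emph{Step 1 (angular reading of the hypothesis).} Because $A$ is a rotationally symmetric annulus in $F\cong\bbH^2$, every radial sector of angular width $\theta$ has volume $(\theta/2\pi)\vol A$. The walls $\calW=\hat\Lambda\cap A$ cut $A$ into radial regions $D_1,\ldots,D_m$ with angular widths $\theta_1,\ldots,\theta_m$ summing to $2\pi$. Replacing $C$ by $C'$ via Lemma~\ref{lem: comparing C and C'} (at the cost of $O(R)$ volume), the intersection $E\cap B_R$ agrees up to volume $O(R)$ with a union of components of $B_R-C'$; each such component meets $A$ in a union of regions. Hence there is a subcollection $\calD$ of regions contained in $E$ up to $O(R)$ error, with
\[
\sum_{D\in\calD}\theta_D \;>\; \tfrac{4\pi}{3} - o(1)\qquad(R\to\infty).
\]

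\emph{Step 2 (radial extension).} The walls bounding each $D\in\calD$ are sub-arcs of full chords of $B_R$ in $\hat\Lambda$, which extend across $\partial B_{r_A^+}$ and delimit a sector $D^+\subset B_R-B_{r_A^+}$ of the same angular width $\theta_D$. Because $\partial B_{r_A^+}$ is not part of $\hat\Lambda$, the pieces $D$ and $D^+$ lie in a common component $P$ of $B_R-\hat\Lambda$ (up to a portion of $D^+$ of volume $O(R)$ possibly cut off by short chords in $\hat\Lambda\setminus\Lambda'$, which by the hyperbolic isoperimetric inequality can isolate only $O(R)$ total volume). Since $\vol P\ge \vol D^+ \asymp \theta_D\,e^R \gg R \succeq \vol C'$, the component $P$ cannot be absorbed into $C'$, and therefore belongs to a component of $B_R-C'$, which by Lemma~\ref{lem: comparing C and C'} corresponds to a component of $B_R-C$ containing $D$, hence to $E$. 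Thus $E\supseteq D^+$ up to volume $O(R)$.

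\emph{Step 3 (volume contradiction).} The sectors $\{D^+:D\in\calD\}$ are pairwise disjoint in the outer annulus $B_R-B_{r_A^+}$, of total angular width $>\tfrac{4\pi}{3}-o(1)$. Since $r_A^+\le R/3$ and the attached sheets contribute only $O(e^{2R/3})=o(\vol X)$ to $\vol X$, we have $\vol(B_R-B_{r_A^+})=(1-o(1))\vol X$, and therefore
\[
\vol E \;\ge\; \sum_{D\in\calD}\vol D^+ - O(R) \;>\; \tfrac{2}{3}\vol(B_R-B_{r_A^+}) - o(\vol X) \;>\; \tfrac{2}{3}\vol X - o(\vol X),
\]
contradicting $\vol E\le\alpha\vol X$ for $\alpha<\tfrac{2}{3}$.

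The main obstacle is the bookkeeping in Step 2: $E$ is a component of $X-C$, not of $B_R-C$, so the sheets $F_{\tilde\gamma}$ can fuse several components of $B_R-C$ into a single $E$. The correspondence between components of $X-C$, $B_R-C$, $B_R-C'$, and $B_R-\hat\Lambda$ must be tracked carefully via Lemma~\ref{lem: comparing C and C'}. Fortunately the walls are full chords, so distinct regions of $A-\calW$ lie in distinct polygons of $B_R-\hat\Lambda$; their outward extensions are therefore automatically disjoint, and the fusion through sheets only enlarges $\vol E$, which is in our favor.
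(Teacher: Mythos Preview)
Your overall idea---radial outward extension from $A$---is exactly what the paper does, but you have routed it through the wall structure, $C'$, and Lemma~\ref{lem: comparing C and C'}, which makes the argument substantially more complicated and introduces a genuine gap.

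The paper's proof is a two-line ray argument that bypasses $\hat\Lambda$ entirely. For $x\in E\cap A$, let $x^*=\{tx:t\ge 1\}\cap B_R$ be the outward radial ray. If $x^*\cap C=\emptyset$, then $x^*$ is a connected subset of $X-C$ meeting $E$, hence $x^*\subset E$. Since $C$ is a union of $O(R)$ unit balls, each ball blocks only the rays in a thin radial tube of $A$, so the set $E_1=\{x\in E\cap A:x^*\cap C=\emptyset\}$ satisfies $\vol(E_1)>\tfrac{2}{3}\vol A-O(R^2)>\tfrac{1}{2}\vol A$. The union $E_1^*=\bigcup_{x\in E_1}x^*\subset E$ then has $\vol(E_1^*)>\tfrac12\vol B_R$, contradicting the cut hypothesis. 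No walls, no $C'$, no correspondence of components.

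Your Step~2 has an actual error. You claim that chords in $\hat\Lambda\setminus\Lambda'$ ``by the hyperbolic isoperimetric inequality can isolate only $O(R)$ total volume'' from $D^+$. But the cap that such a chord cuts off from $B_R$ has boundary consisting of the chord \emph{and an arc of $\partial B_R$}; the isoperimetric inequality bounds the area by the full perimeter, not by the chord length alone. A chord at distance $d>R/3$ from $o$ subtends angle $\asymp e^{-d}$ at $o$ and cuts off a cap of area $\asymp e^{R-d}$, which can be as large as $e^{2R/3}\gg R$. So the ``up to $O(R)$'' in Step~2 is false. (In aggregate over all $D\in\calD$ this defect is still $o(e^R)$ and the argument could be repaired, but not as you wrote it.) You also do not address chords in $\Lambda'$: a chord $\lambda\in\Lambda'$ contributes a wall in $A$, but its \emph{other} branch (close to the second radial geodesic in Claim~\ref{claim: properties of walls}(ii)) may slice through your $D^+$ far from the walls bounding $D$.

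Finally, your worry about sheet-induced fusion is a red herring once you use the ray argument: each $x^*$ lies in $B_R\subset X$ and avoids $C$, so $x^*\subset E$ directly as a component of $X-C$; there is no need to pass through components of $B_R-C$.
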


\begin{proof}
Let $E$ be a component of $X-C$. Assume for contradiction that $\vol(E\cap A) > \frac{2}{3} \vol (A)$. For every $x\in E\cap A$ consider the ray $x^* = \bbR_{\ge 1} x \cap B_R = \{  tx \in B_R \mid t\ge 1\}$. 
Let $E_1 = \{x\in E \mid x^* \cap C = \emptyset\}$.
Since $\vol(C)$ consists of $O(R)$ balls of radius 1, the set $E-E_1$ consists of at most $O(R)$ 1-neighborhoods of arcs of length $O(R)$. Whence, $\vol(E-E_1) \preceq R^2$.
Consider the set $E_1^* = \bigcup_{x\in E_1} x^*$. 
Thus $\vol(E_1) > \frac{1}{2} \vol(A)$, and therefore also $\vol(E_1^*) > \frac{1}{2} \vol(B_R)$. 
The set $E_1^* \subseteq E$, thus $\vol (E) > \frac{1}{2} \vol(B_R)$.  We get a contradiction to the assumption that the volume of components of $X-C$ are at most $\frac{1}{2} \vol(B_R)$.
\end{proof}

By the previous two claims there are two regions $D_+,D_-$ of $A - \calW$ which correspond to two different components $E_+,E_-$ of $X-C$. We may assume that $D_1$ and $D_2$ are adjacent, and are separated by a wall $\alpha$.
Let $\alpha_m$ be the middle third subarc of $\alpha$.

\begin{claim}\label{claim: disjoint lifts}
There is $k\asymp R$, and disjoint lifts $\tild\gamma_1,\ldots,\tild\gamma_k$ of $\gamma$ such that $\tild \gamma_i \cap \alpha_m \ne \emptyset$.
\end{claim}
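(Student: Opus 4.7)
The plan is to exploit the filling hypothesis on $\gamma$ together with standard hyperbolic geometry, following three steps: extract a geometric picture of the lifts of $\gamma$ from filling, bound the length of $\alpha_m$ from below by $R$, and then count crossings.

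For the first step, I would record that since $\gamma$ is a simple filling closed geodesic on the closed hyperbolic surface $\Sigma$, the complement $\Sigma \setminus \gamma$ is a disjoint union of finitely many open disks. Lifting to $F \cong \bbH^2$, the preimage $\tilde\Gamma$ of $\gamma$ is a $\pi_1\Sigma$–invariant collection of complete geodesic lines that cuts $F$ into a locally finite tiling by convex hyperbolic polygons. Since $\Sigma$ is compact and the complementary pieces of $\gamma$ come in finitely many isometry types, there is a uniform constant $D$ bounding the diameter of every such polygon (and a uniform positive lower bound on its area). Moreover, simplicity of $\gamma$ forces any two distinct lifts in $\tilde\Gamma$ to be disjoint: a common point would project to a self-intersection of $\gamma$.

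For the second step, I would unwind the construction of $A$ and $\alpha_m$. The ball $B_{R/3}$ has been divided into $3((2k)^2+1) = O(1)$ radial layers of equal width (here the $k$ from Claim \ref{claim: properties of walls} is $O(1)$), so the layer $A$ has inner and outer radii differing by a definite fraction of $R/3$. By Claim \ref{claim: properties of walls}(i) the wall $\alpha$ is a geodesic arc joining the two boundary circles of $A$, so its hyperbolic length is $\asymp R$, and therefore the middle-third subarc $\alpha_m$ also has length $\asymp R$.

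For the third step, I would run the crossing count. Any subarc of $\alpha_m$ contained in a single complementary polygon of $\tilde\Gamma$ has length at most $D$, so $\alpha_m$ must pass through at least $\length(\alpha_m)/D \succeq R$ distinct polygons, hence must cross at least that many lifts of $\gamma$. These crossings pick out pairwise distinct lifts, because two distinct geodesics of $\bbH^2$ meet in at most one point, so the crossings with a single lift contribute only one intersection with $\alpha_m$. For the matching upper bound $k \preceq R$, every polygon meeting $\alpha_m$ is contained in its $D$-neighborhood, whose hyperbolic area is $\asymp \length(\alpha_m) \asymp R$, and polygons have a uniform lower bound on area. Hence $k \asymp R$, and by the disjointness statement from the first step the lifts $\tilde\gamma_1,\ldots,\tilde\gamma_k$ are pairwise disjoint, as required.

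I do not expect a serious obstacle. The only mildly delicate point is the uniform diameter bound $D$ on the complementary polygons of $\tilde\Gamma$, but this is a standard consequence of filling together with compactness of $\Sigma$. Everything else is a length/area bookkeeping exercise in $\bbH^2$.
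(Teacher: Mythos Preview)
Your counting argument (uniform diameter bound $D$ on the complementary polygons, $\length(\alpha_m)\asymp R$, hence $\succeq R$ crossings) is correct and matches the paper. The gap is in the disjointness step.

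You deduce pairwise disjointness of the selected lifts from simplicity of $\gamma$. But a closed geodesic on a closed hyperbolic surface that is \emph{filling} is never simple: the complement of a simple closed geodesic on a genus $\ge 2$ surface always contains a non-disk component. (The word ``simple'' in the paper's setup paragraph is inconsistent with ``filling''; note that the statement of Theorem~\ref{theorem: gluing tori along filling curves} says only ``closed filling curve''.) So the lifts of $\gamma$ \emph{do} intersect one another in $F$, and your appeal to simplicity is vacuous. Concretely, among the $\succeq R$ lifts you produce, two that cross $\alpha_m$ at very small angle and at nearby points may well meet.

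The paper supplies the missing idea with an angle argument. Since the complementary polygons come in finitely many isometry types, a compactness argument gives $\theta>0$ such that any geodesic line crossing one of these polygons meets at least one of its sides at angle $\ge\theta$. Hence on every length-$2D$ subarc of $\alpha_m$ one can pick a lift of $\gamma$ crossing $\alpha_m$ at angle $\ge\theta$. Now choose $\mu=\mu(\theta)$ so that two geodesics meeting a common geodesic at angle $\ge\theta$ and at points $\ge\mu$ apart are disjoint (an elementary fact in $\bbH^2$). Selecting one such lift per $\mu+2D$ subinterval of $\alpha_m$ yields $k\asymp R$ pairwise disjoint lifts. Your argument is essentially complete once you insert this angle-selection step in place of the simplicity claim.
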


\begin{proof}
Consider the union $\Gamma = \bigcup \tild{\gamma}$ of all the lifts $\tild{\gamma}$ of $\gamma$ to the universal cover $F$ of $\Sigma$. 
Since $\gamma$ is filling in $\Sigma$, the connected components of $F-\Gamma$ are one of finitely many types of convex hyperbolic non-ideal polygons.
Let $d$ be the maximal diameter of these polygons. 
There exists an angle $\theta$ such that every geodesic line intersecting one of the polygons, forms an angle $\theta$ with at least one of its sides.
Let $\mu>0$ be such that if two geodesic lines $l_1,l_2$ in the hyperbolic plane intersect a third geodesic line $l$ at points of distance $\ge \mu$ and at angles $\ge \theta$, then $l_1,l_2$ do not meet.

Let $\ell = \length(\alpha_m)\asymp R$. 
Every segment of length $2d$ on $\alpha_m$ intersects a lift $\tild{\gamma}$ of $\gamma$ in an angle $\ge \theta$.
Thus, the geodesic segment $\alpha_m$ intersects at least $k=\ell / (\mu + 2d)$ lifts $\tild{\gamma}_1,\ldots,\tild{\gamma}_k$ of $\gamma$ in an angle $\ge \theta$.
Note that $k\asymp \ell \asymp R$.
By the choice of $\mu$, $\tild{\gamma}_1,\ldots,\tild{\gamma}_k$ are disjoint.
\end{proof}

Let $\tild\gamma_1,\ldots,\tild\gamma_k$ be the disjoint lifts as in Claim \ref{claim: disjoint lifts}.
The geodesic segments $\tild\gamma_i \cap D_\pm$ have length at least $\succeq \sqrt{R}$ by Claim \ref{claim: properties of walls} and by the choice of $\alpha_m$.
The circles around the point $\tild\gamma_i\cap \alpha_m$ in $F_{\tild \gamma_i}$ form $\Theta(\sqrt{R})$ disjoint paths connecting points in $D_+$ to points in $D_-$.
Considering these paths for all $\tild \gamma_i$, we get $\Theta(R^{3/2})$ disjoint paths connecting $D_+$ to $D_-$.
By Claim \ref{claim: difference between region and component}, $\vol(D_\pm - E_\pm) \preceq R$, and so we have at least $\succeq R^{3/2} - O(R)$ disjoint paths connecting $E_+$ to $E_-$.
Since $E_+$ and $E_-$ are different components of $X-C$, each of these paths meets $C$. We get $\vol(C) \succeq R^{3/2}$ which contradicts $\vol(C)\preceq R$.
This ends the proof of Proposition \ref{prop: cut bigger than R}.
\end{proof}

\bibliographystyle{plain}
\bibliography{biblio}
\end{document}